\documentclass[11pt,a4paper]{article}

\usepackage[T1]{fontenc}
\usepackage[utf8]{inputenc}
\usepackage{lmodern}
\usepackage{microtype}

\usepackage[margin=2.5cm,
            headheight=14pt,
            headsep=0.8cm,
            footskip=1.0cm]{geometry}

\usepackage{amsmath,amssymb,amsthm,mathtools}
\usepackage{xcolor}
\usepackage{cite}
\usepackage[hidelinks]{hyperref}
\hypersetup{
  pdftitle={Maximal Gaps for Dilated Lacunary Integer Sequences},
  pdfauthor={Yuval Peres and Bohan Yang},
  pdfsubject={Almost-sure maximal gaps for lacunary dilations}
}
\usepackage{aliascnt}

\usepackage{enumitem}
\usepackage{titling}                     

\numberwithin{equation}{section}
\allowdisplaybreaks                      

\theoremstyle{plain}                     

\newtheorem{theorem}{Theorem}[section]

\newaliascnt{proposition}{theorem}
\newtheorem{proposition}[proposition]{Proposition}
\aliascntresetthe{proposition}

\newaliascnt{lemma}{theorem}
\newtheorem{lemma}[lemma]{Lemma}
\aliascntresetthe{lemma}

\newaliascnt{corollary}{theorem}

\aliascntresetthe{corollary}

\theoremstyle{definition}
\newaliascnt{definition}{theorem}
\newtheorem{definition}[definition]{Definition}
\aliascntresetthe{definition}

\theoremstyle{remark}
\newaliascnt{remark}{theorem}
\newtheorem{remark}[remark]{Remark}
\aliascntresetthe{remark}

\newaliascnt{claim}{theorem}

\aliascntresetthe{claim}

\usepackage[nameinlink,capitalize]{cleveref}

\crefname{theorem}{Theorem}{Theorems}
\crefname{proposition}{Proposition}{Propositions}
\crefname{lemma}{Lemma}{Lemmas}
\crefname{corollary}{Corollary}{Corollaries}
\crefname{definition}{Definition}{Definitions}
\crefname{remark}{Remark}{Remarks}
\crefname{claim}{Claim}{Claims}

\newcommand{\bbT}{\mathbb{T}}
\newcommand{\bbR}{\mathbb{R}}
\newcommand{\bbZ}{\mathbb{Z}}
\newcommand{\bbN}{\mathbb{N}}
\newcommand{\Leb}{\lambda}
\newcommand{\ind}{\mathbf{1}}
\newcommand{\E}{\mathbb{E}}
\newcommand{\PP}{\mathbb{P}}

\DeclareMathOperator{\Var}{Var}

\title{Maximal Gaps for Dilated Lacunary Integer Sequences}

\author{
    Yuval Peres\\[2pt]
    \small Beijing Institute of Mathematical Sciences and Applications (BIMSA)
    \and
    Bohan Yang\\[2pt]
    \small Shanghai Institute for Mathematics and Interdisciplinary Sciences (SIMIS)}

\date{\today}

\begin{document}

\maketitle

\begin{abstract}
Let \((a_n)_{n\ge1}\subset\bbN\) be a lacunary sequence,
\(a_{n+1}\ge q a_n\) for \(q>1\).  For \(x\in\bbT\), we study the
maximal empty circular gap \(G_N(x)\) of the finite orbit
\(\{a_1x,\ldots,a_Nx\}\). 
We prove that, for Lebesgue-almost every \(x\),
\[
  \frac12
  \le \liminf_{N\to\infty}\frac{NG_N(x)}{\log N}
  \le \limsup_{N\to\infty}\frac{NG_N(x)}{\log N}
  \le  \frac{q+1}{q-1}\,.
\]
If, in addition, \(a_n\mid a_{n+1}\) for every \(n\), then this can be improved to 
\[
  \lim_{N\to\infty}\frac{NG_N(x)}{\log N}=1
\]
for Lebesgue-almost every \(x\).
\end{abstract}

\section{Introduction}\label{sec:intro}

Let \(\bbT=\bbR/\bbZ\).  Given an increasing sequence
\((a_n)_{n\ge1}\) and a point \(x\in\bbT\), consider the finite set
\begin{equation}\label{eq:orbit}
    A_N(x)=\{a_1x,\ldots,a_Nx\}\subset\bbT .
\end{equation}
The central object of this paper is the maximal circular gap of this point
set,
\begin{equation}\label{eq:gap-def}
    G_N(x)
    =
    \sup\bigl\{|J|:\ J\subset\bbT\text{ is an interval and }
    J\cap A_N(x)=\varnothing\bigr\}.
\end{equation}
Equivalently, \(\frac{1}{2}G_N(x)\) measures the maximal error in approximating targets
  \(t\in\bbT\) by $A_N(x)$, so
\begin{equation}\label{eq:gap-dioph}
    G_N(x)
    =
    2\sup_{t\in\bbT}
    \min_{1\le n\le N}
    \|a_nx-t\|.
\end{equation}


For \(N\) independent uniform points on \(\bbT\), the maximal gap is asymptotic to \((\log N)/N\), see \cite{Devroye}.  Since lacunary dilates $(a_n x)_{n \ge 1}$ often exhibit  properties similar to independent random variables
 for Lebesgue-typical \(x\), it is natural to ask whether $G_N(x)$ has the same asymptotics. 

We now assume that
\((a_n)\subset\bbN\) satisfies the Hadamard lacunarity condition
\begin{equation}\label{eq:H}
    a_{n+1}\ge q a_n,\qquad q>1.
\end{equation}
The maximal-gap problem for dilated lacunary sequences was studied by
Chow--Technau~\cite{ChowTechnau} and Stefanescu~\cite{Stefanescu,StefanescuConvex}.
 In addition to the
Lebesgue case, the corresponding almost-sure statements were also obtained
for probability measures \(\mu\) satisfying suitable Fourier-decay assumptions.
Chow--Technau~\cite{ChowTechnau} proved that, for every \(\varepsilon>0\) and for
\(\mu\)-almost every dilation parameter \(x\),
\[
    G_N(x)
    =
    O\left(\frac{(\log N)^{3+\varepsilon}}{N}\right).
\]
Stefanescu~\cite{Stefanescu} improved this to
\[
    G_N(x)
    =
    O\left(\frac{(\log N)^{2+\varepsilon}}{N}\right),
\]
and later in~\cite{StefanescuConvex} removed the \(\varepsilon\) from the logarithmic exponent in a
multidimensional convex-body version of the problem.  In particular, in
dimension one this gives the bound
\[
    G_N(x)
    =
    O\left(\frac{(\log N)^2}{N}\right)
\]
for \(\mu\)-almost every \(x\).

Here we prove Lebesgue-almost sure upper and lower bounds of order \((\log N)/N\).
Define
\begin{equation}\label{eq:Gamma-main}
    \Gamma :=
    \limsup_{L\to\infty}\sup_{p\ge1} \Gamma(p,L), \, \text{ where } \,
    \Gamma(p,L):=\frac1L\sum_{m=p}^{p+L-1} \; \sum_{n=m+1}^{p+L}\frac{a_m}{a_n} \, .
\end{equation}
Lacunarity gives \(0\le\Gamma\le(q-1)^{-1}\). We discuss $\Gamma$ in \cref{sec:upper}.

\begin{theorem} \label{thm:one-d}
    Assume~\eqref{eq:H}. Then, for Lebesgue-almost every \(x\in\bbT\),
    \begin{equation}\label{eq:main-liminf}
        \liminf_{N\to\infty}\frac{NG_N(x)}{\log N}
        \ge \frac12,
    \end{equation}
    and
    \begin{equation}\label{eq:main-limsup}
        \limsup_{N\to\infty}\frac{NG_N(x)}{\log N}
        \le 1+2\Gamma
        \le \frac{q+1}{q-1}.
    \end{equation}
\end{theorem}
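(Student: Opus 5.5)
We prove the two bounds separately. Both proofs discretize the circle into short arcs and control the events that a given arc contains no point of \(A_N(x)\). The key structural fact is that, for an arc \(I\) of length \(\ell\), the events \(E_n=\{x: a_nx\in I\}\) are nearly independent once the indices are well separated. Precisely, \(\Leb(E_m\cap E_n)=\ell^2+O(\ell\, a_m/a_n)\) for \(m<n\): \(E_m\) is a union of \(a_m\) intervals of length \(\ell/a_m\), and on each such interval the map \(x\mapsto a_nx\) wraps around \(\bbT\) about \(a_n\ell/a_m\) times. The correction \(a_m/a_n\) is what produces \(\Gamma\).

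\textbf{Upper bound \eqref{eq:main-limsup}.} Fix \(c>1+2\Gamma\) and set \(\ell=\ell_N=c'\log N/N\) with \(1+2\Gamma<c'<c\). Cover \(\bbT\) by \(O(N/\log N)\) arcs \(I_j\) of length \(\ell(1-\delta)\) such that every interval of length \(\ell\) contains some \(I_j\). Then \(G_N(x)>\ell\) implies that some \(I_j\) misses \(A_N(x)\). For a fixed arc \(I\), we bound \(\Leb\{x: a_nx\notin I,\ n\le N\}\). Split \(\{1,\dots,N\}\) into blocks of length \(L\), with \(L\) large but fixed. Within a block, inclusion--exclusion (Bonferroni) gives
\[
\Leb\Bigl(\bigcup_{n\in B} E_n\Bigr)\ge L\ell-\sum_{m<n\in B}\Leb(E_m\cap E_n)\ge L\ell-\ell\,L\,\Gamma(p,L)(1+o(1))-L^2\ell^2 .
\]
This does not yet give \(1+2\Gamma\), so instead we estimate the product directly. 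By Fourier expansion of the indicator of the complement of \(I\), smoothed slightly, we write the probability of avoidance as an expectation of a product. Using lacunarity, the frequency correlations across different blocks are negligible once the blocks are separated by gaps of \(\asymp\log\log N\) indices, which we discard. This yields
\[
\Leb\{\text{avoid}\}\le \exp\bigl(-N\ell/(1+2\Gamma+o(1))\bigr)=N^{-c'/(1+2\Gamma)+o(1)} .
\]
The factor \(1+2\Gamma\) comes from the effective variance: the second-moment sum \(\sum_{m,n}\Leb(E_m\cap E_n)\approx N\ell(1+2\Gamma)\) when \(\ell\) is small, and \(\Leb(\bigcup E_n)\ge(\sum\Leb E_n)^2/\sum_{m,n}\Leb(E_m\cap E_n)\) by Cauchy--Schwarz/Paley--Zygmund. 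I will use exactly this second-moment bound per block, so that each block of length \(L\) is hit with probability \(\ge L\ell/(1+2\Gamma(p,L))\). I then multiply across separated blocks using approximate independence, via Fourier decay, between block events at scales \(a_p\ll a_{p'}\). Summing over the \(O(N/\log N)\) arcs gives failure probability \(N^{1-c'/(1+2\Gamma)+o(1)}\). This is summable along \(N_k=\lfloor(1+\eta)^k\rfloor\), and monotonicity of \(G_N\) interpolates between these values. Borel--Cantelli finishes the argument. The bound \(\Gamma\le(q-1)^{-1}\) follows from the geometric series, giving \(1+2\Gamma\le(q+1)/(q-1)\).

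\textbf{Lower bound \eqref{eq:main-liminf}.} Fix \(c<1/2\) and \(\ell=c\log N/N\). Take \(M\asymp N/\log N\) disjoint arcs \(I_j\) of length \(\ell\), and let \(Z=\#\{j: I_j\cap A_N(x)=\varnothing\}\). We show \(Z>0\) with high probability by a second-moment argument. Lower-bounding avoidance probabilities is the hard direction, so we settle for the factor \(1/2\). Work along the subsequence \(N_k=2^k\). Use independence-like estimates only for a sparse subfamily of indices, together with the trivial bound for the rest: \(\Leb(\text{avoid})\ge 1-\sum\Leb(E_n)\) restricted to small groups, combined multiplicatively over separated groups. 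This gives \(\E Z\gtrsim M N^{-2c-o(1)}\to\infty\), and a covariance bound \(\Var Z=o((\E Z)^2)\) follows because events for distinct arcs decorrelate at the same lacunary scale. Chebyshev and Borel--Cantelli along \(N_k\) then give the result, and \(G_N\ge G_{N_{k+1}}\) for \(N\le N_{k+1}\) passes it to all \(N\), losing a negligible factor.

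\textbf{Main obstacle.} The main obstacle is the rigorous multiplicative decoupling across blocks: bounding \(\Leb(\bigcap_{\text{blocks}}F_b)\) by \(\prod\Leb(F_b)\) up to \(1+o(1)\) errors per block, uniformly over \(N/\log N\) blocks. My first approach is to approximate each block indicator by a trigonometric polynomial of degree \(\lesssim a_{\max(b)}/\ell^2\), and to use the separation gaps of \(\asymp C\log N\) indices so that frequency sums from distinct blocks never cancel.
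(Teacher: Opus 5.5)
Both halves have genuine gaps.

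\textbf{Upper bound.} The bookkeeping already fails. With $L$ fixed and gaps of $\asymp\log\log N$ (or $\asymp\log N$) discarded indices, only a fraction $L/(L+\text{gap})\to0$ of $\{1,\dots,N\}$ is used. So the exponent you collect is $o(N\ell)=o(\log N)$, and the no-hit bound is $N^{-o(1)}$, which cannot beat the union over $N/\log N$ arcs. Blocks must be much longer than buffers; the paper takes $L=\lceil\beta/s\rceil\asymp N/\log N$, buffers $h=\lceil10\log N/\log q\rceil$, and lets $\beta\downarrow0$ only at the end.

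More seriously, the step you call the main obstacle is left open, and your proposed mechanism is not right as stated. For a general lacunary integer sequence, $\prod_{n\in b}f(a_nx)$ has nonzero frequencies $\sum k_na_n$ of size $O(1)$ with coefficients of order $\ell^2$; for instance $a_{n+1}=2a_n+1$ gives $a_{n+1}-2a_n=1$. So frequencies from different blocks do interact. Also, a multiplicative error $1+o(1)$ per block is useless over many blocks; you would need $1+o(L\ell)$.

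The missing idea is to avoid decoupling entirely by working conditionally: apply Paley--Zygmund to Lebesgue measure restricted to the survivor set $F=\Omega_{u-1}(J)$. Because of the buffer, $F$ is a finite union of intervals with $\#\partial F\le\frac{2}{q-1}q^{-h}a_{p_u}$. The one-point lemma then gives $\int_F S_u^J\,dx=\mu\Leb_1(F)+O(N^{-10})$. The two-point lemma gives the position-uniform bound $\Leb_1(F)(s^2+s\,a_m/a_n)+\#\partial F\,s/a_m$, and only an upper bound on pair correlations is needed. This yields $\Leb_1(\Omega_u)\le\bigl(1-\tfrac{\mu}{\mu+1+2\Gamma+o(1)}\bigr)\Leb_1(\Omega_{u-1})$ whenever $\Leb_1(\Omega_{u-1})\ge N^{-4}$. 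Iterating over the $\asymp(\tau/\beta)\log N$ blocks gives the exponent $\tau/(\beta+1+2\Gamma+o(1))$.

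\textbf{Lower bound.} You give no mechanism for the hard direction. Lower-bounding the emptiness probability of a fixed arc by ``combining groups multiplicatively'' needs near-independence with errors small compared with $N^{-2c}$. For a fixed target in one dimension, however, pair intersections are only known to be $\ell^2+O(\ell\,a_j/a_k)$. A buffered recursion therefore controls the per-step error only by $\ell\sum_j a_j/a_k\lesssim\ell$, i.e.\ by $\lesssim N\ell\asymp\log N$ in total. Fixed-arc avoidance probabilities are also genuinely position-dependent: when $a_{n+1}/a_n=r$ is an integer, an arc around a fixed point of $y\mapsto ry$ has clustered hits.

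Likewise, $\Var Z=o((\E Z)^2)$ at a summable rate needs $\PP(I_i,I_j\text{ both empty})\le(1+N^{-\gamma})\PP(I_i\text{ empty})\PP(I_j\text{ empty})$. That requires sharp asymptotics for single and pair avoidance, which a lossy bound $N^{-2c}$ plus an unproved ``decorrelation'' cannot supply.

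The paper's key idea is the moving target. On $\bbT^2$ the events $\{a_nx-t\in B\}$ are exactly pairwise independent, because $(x,t)\mapsto(a_jx-t,a_kx-t)$ has determinant $a_k-a_j\neq0$. With a buffer $h\asymp\log N$ this gives $|P_k-(1-b)P_{k-1}|\le2hb^2+O(q^{-h})$, hence $P_N(B)=(1-b)^N+O((\log N)^3/N)$ for $B$ one cell or a union of two cells. The average over $t$ is implemented by randomly shifting the partition, which gives the exact identities $\E\widehat K=M\,P_N([0,s))$ and $\E\widehat K^2=M\sum_rP_N(B_r)$. The constant $1/2$ comes from needing the additive error $(\log N)^3/N$ to be $o(N^{-2\tau})$.

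Finally, interpolating along $N_k=2^k$ loses a factor $2$, not a negligible one: it gives only $\liminf\ge c/2$. Use $N_k=\lfloor\alpha^k\rfloor$ with $\alpha\downarrow1$.
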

For sequences $a_n=a^n$ with integer $a>1$, we can prove that the limiting constant is the same as in the independent case.  Indeed, divisibility of each $a_{n+1}$ by $a_n$ suffices. 
\begin{theorem}[Divisibility chains]\label{thm:divisible}
    Assume~\eqref{eq:H} and, in addition, suppose that
    \(a_n\mid a_{n+1}\) for every \(n\ge1\).
    Then, for Lebesgue-almost every \(x\in\bbT\),
    \begin{equation}\label{eq:divisible-limit}
        \lim_{N\to\infty}\frac{NG_N(x)}{\log N}=1.
    \end{equation}
\end{theorem}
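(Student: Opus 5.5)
We prove the two halves of \eqref{eq:divisible-limit} separately. In both, the key structural fact is what divisibility buys us. Put $b_n=a_{n+1}/a_n\in\bbN$, $b_n\ge q$ (so $b_n\ge2$). Then $a_nx=\pi_n(x)$, where $\pi_n$ is a factor map $\bbT\to\bbT$ of the tower $x\mapsto a_1x\mapsto a_2x\mapsto\cdots$. In particular, for any interval $I$ of length $|I|<1/a_{n+1}$ (say), the events $\{a_mx\notin I\}$, $m\le n$, depend only on the coarse digit structure of $x$ at scale $a_n$. Concretely, the law of $a_{n+1}x$ given $a_1x,\dots,a_nx$ is uniform on the $b_n$ preimages of $a_nx$ under $y\mapsto b_ny$. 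This gives an exact, Markov-type independence at scales finer than the gaps. It replaces the correlation term $\Gamma$ of \cref{thm:one-d}: the upper bound $1+2\Gamma$ came from pairs $m<n$ with $a_m/a_n$ not small, and divisibility should kill those correlations exactly.

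\emph{Upper bound.} Fix $\varepsilon>0$ and set $\delta_N=(1+\varepsilon)\log N/N$. Cover $\bbT$ by $K\asymp N/(\varepsilon\log N)$ arcs $J_1,\dots,J_K$ of length $\varepsilon\delta_N/2$. If $G_N(x)>\delta_N$, then some arc $I$ of length $(1-\varepsilon)\delta_N$, a union of consecutive $J_k$ with endpoints in the grid, misses $A_N(x)$. We estimate $\Leb\{x: a_nx\notin I,\ n\le N\}$ for a fixed arc $I$ with $|I|=\ell$. Condition successively on the coarser information and note that, given $a_nx$, the point $a_{n+1}x$ is uniform among $b_n$ equally spaced points spaced $1/b_n$ apart. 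This single-step conditional probability is not close to $1-\ell$ for small $b_n$, so instead we group indices into blocks of length $L$ (large, fixed). Within a block $n=p+1,\dots,p+L$, the variable $a_{p+L}x$ given $a_px$ is uniform on $a_{p+L}/a_p\ge q^L$ equally spaced points, and $a_{p+j}x$ is determined by it. The problem is then a deterministic count: among $M=a_{p+L}/a_p$ equally spaced points $y$, the fraction for which $\{(a_{p+j}/a_{p+L})\cdot\text{lifts}\}$ avoids $I$. I expect an exact computation here: the number of $y$ with $a_{p+j}x\in I$ equals $M\ell+O(1)$ for each $j$. Hence the expected number of hits in a block is $L\ell+O(L/M)$ uniformly in the conditioning, and, using that hits in distinct $j$ are at most weakly dependent, the avoidance probability is $\le (1-\ell)^{L}e^{O(L\ell^2+\cdots)}$. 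Multiplying over $N/L$ blocks gives $\Leb\le \exp(-(1-o(1))N\ell)=N^{-(1-\varepsilon)(1+\varepsilon)+o(1)}$. This is the main obstacle: obtaining the factor $(1-\ell)^{L}$ rather than something like $(1-\ell)^{L/(1+2\Gamma)}$. It hinges on showing that within a block, the hit events for different $j$ are nearly disjoint when $\ell\ll 1/M$ fails, or nearly independent when $\ell\ll1/M$ holds. My first attempt would be inclusion–exclusion: bound the pairwise probabilities $\PP(a_{p+i}x\in I,\ a_{p+j}x\in I)\le \ell^2+\ell/(a_{p+j}/a_{p+i})$, which is $\ell^2$-small only when summed with the $1/M$ weights. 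Taking only $O(1/\varepsilon)$-sized $\ell$-dependent losses suffices once $\ell N\to\infty$ slowly compared to $N$. A union bound over $O(K)$ grid arcs then gives probability $N^{-\varepsilon/2}$. Passing to a geometric subsequence $N_k=\lfloor(1+\varepsilon)^k\rfloor$ with monotonicity of $G_N$ in $N$ and Borel–Cantelli yields $\limsup\le1+O(\varepsilon)$.

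\emph{Lower bound.} Set $\ell=(1-\varepsilon)\log N/N$ and take $R=\lfloor 1/(2\ell)\rfloor$ disjoint arcs $I_1,\dots,I_R$ of length $\ell$, separated. Let $Z$ be the number of $I_r$ avoided by $A_N(x)$. By the same block computation, $\E Z\approx R e^{-N\ell}=N^{\varepsilon+o(1)}$. For the second moment, the pair probability for disjoint arcs, computed in the same way, is $\le(1+o(1))e^{-2N\ell}$. Hence $\Var Z=o((\E Z)^2)$ and $\PP(Z=0)\le N^{-\varepsilon+o(1)}$. Borel–Cantelli along $N_k$, together with monotonicity between consecutive $N_k$ (the change of $\log N/N$ is a factor $1+O(\varepsilon)$), gives $\liminf\ge1-O(\varepsilon)$, which improves \eqref{eq:main-liminf}. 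Finally, let $\varepsilon\to0$ along a countable sequence.
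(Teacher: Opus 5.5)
\textbf{There is a genuine gap, at exactly the step you call ``the main obstacle.''} It cannot be closed as you set things up, because the estimate you aim for is false. You want a bound holding uniformly over \emph{all} arcs $I$ of length $\ell\asymp\log N/N$: $\Leb_1\{x:a_nx\notin I,\ n\le N\}\le e^{-(1-o(1))N\ell}$, and for the lower bound $(1+o(1))e^{-2N\ell}$ for pairs of arcs.

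Take $a_n=2^{n-1}$ and $I=[0,\ell)$. Since $2^{n-1}x\in[0,\ell/2)$ forces $2^nx\in I$, hits come in clusters. In fact the orbit avoids $I$ up to time $N$ iff $x\notin I$ and $2^kx\notin[1/2,1/2+\ell/2)$ for $k\le N-2$. This entry set has measure $\ell/2$, so the avoidance probability is of order $e^{-N\ell/2}$, roughly $N^{-1/2}$ rather than $N^{-1}$. The same happens for arcs near the fixed points $k/(r-1)$ of $T_ry=ry$ with small $r$, and for pairs $(I,J)$ with $J\cap T_r^{-1}I\neq\varnothing$.

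For such arcs, hits at different indices in a block are strongly positively correlated. Your pairwise bound $\ell^2+\ell\,a_{p+i}/a_{p+j}$, summed over a block, has size about $\ell L\Gamma$, the same order as $L\ell$. That is precisely how the constant $1+2\Gamma$ of \cref{thm:one-d} arises, so the proposal gives no mechanism to beat it.

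Your conditional structure is also reversed. Given $a_nx$, the point $a_{n+1}x=b_na_nx$ is determined. It is $a_nx$ given $a_{n+1}x$ (equivalently, given the tail of the mixed-radix digits) that is uniform on $b_n$ preimages.

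\textbf{The missing idea is to treat arcs with short returns separately.} Divisibility makes $r=a_n/a_m$ an integer, with $a_nx=T_r(a_mx)$.
\begin{itemize}
\item Call an interval regular if $J\cap T_r^{-1}J=\varnothing$ for all $2\le r\le R=\lfloor(\log N)^4\rfloor$. For such $J$, $\ind_J(a_mx)\ind_J(a_nx)\equiv0$ whenever $a_n/a_m\le R$.
\item Pairs with $a_n/a_m>R$ add at most $2s/R$ per $m$ beyond the $s^2$ term, by \cref{lem:twopoint}.
\item Block Paley--Zygmund then removes the proportion $\mu/(\mu+1+4/R)$ per block, which yields the sharp exponent.
\item Non-regular mesh intervals lie within $O(s)$ of fixed points of some $T_r$ with $r\le R$. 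There are only $O(R^2)$ of them, and the crude bound $N^{-\tau/(\beta+5)+o(1)}$ from \cref{prop:nohit-1d} suffices for them in the union bound.
\end{itemize}

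For the lower bound, the paper restricts to a family of $\gg M/R^2$ cells with no short returns \emph{within or between} members (\cref{lem:separated-family}). Separation in position, as in your proposal, is not enough; what is needed is separation under the maps $T_r$. For single cells and pairs the paper proves avoidance probability $\exp(-N\Leb_1(H)+O((\log N)^{-3}))$. It uses a $Q$-adic sandwich of $H$, after which $\{a_nx\in H_{n,\pm}\}$ depends on a window of $O(\log N)$ independent digits. Suen's inequality gives the upper bound and the local lemma the lower bound.

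Your second-moment argument needs both of these ingredients:
\begin{itemize}
\item a \emph{lower} bound on single-arc avoidance (your block computation only aims at upper bounds);
\item errors of size $o(1)$, not $o(\log N)$, in the exponent, so that $\E Z^2/(\E Z)^2\to1$.
\end{itemize}

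\textbf{Your upper-bound parameters also do not close.} A guaranteed empty grid arc of length $(1-\varepsilon)(1+\varepsilon)\log N/N$, combined with a union over $\asymp N/\log N$ arcs, gives $N^{\varepsilon^2+o(1)}\to\infty$. The grid arc must have length $(1+c)\log N/N$ for some $c>0$.
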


\begin{remark}
The lower constant \(1/2\) in \cref{thm:one-d} comes from a second-moment
estimate for terminal empty cells.  The upper bound follows from
Paley--Zygmund applied block by block, with the local correlation constant
\(\Gamma\) recording the actual ratio structure.  Under the divisibility
hypothesis, short multiplicative returns can be isolated and the mixed-radix
digit structure yields the sharp constant~\(1\).
\end{remark}


\subsection{Related work and comparison}

The identity \eqref{eq:gap-dioph} gives a direct Diophantine interpretation of
the problem.  For fixed \(x\), the finite orbit
\(\{a_nx:1\le n\le N\}\) is used to approximate arbitrary inhomogeneous
targets \(t\in\bbT\).  Thus \(G_N(x)\) is a finite-time uniform
inhomogeneous approximation radius.

This formulation should be distinguished from the classical shrinking-target
problem studied, for example, by
Pollington--Velani--Zafeiropoulos--Zorin~\cite{PVZZ}.  In that setting one
fixes a target \(t\) and studies whether
\[
|a_nx-t|<\psi(n)
\]
holds infinitely often.  In the present problem the target is not fixed in
advance: the supremum over \(t\) in \eqref{eq:gap-dioph} asks for a uniform
approximation statement over all targets simultaneously.  Related uniform
target-set questions for typical orbits of expanding Markov maps, including
Hausdorff-dimension formulae for uniformly approximable target sets, were
studied by He and Liao~\cite{HeLiao}.  This uniformity is precisely what turns
the present question into a covering-radius, or maximal-gap, problem.

There is also a closely related circle-covering viewpoint.  Recent work of
Hauke--Shubin--Stefanescu--Zafeiropoulos~\cite{HSSZ} studies coverings of the
circle by shrinking intervals centered at points \(\{a_nx\}\), with
applications to metric Diophantine approximation and Littlewood-type problems.
In that setting one studies whether shrinking intervals cover the circle
infinitely often.  By contrast, the present paper studies a finite-time covering
radius: for each \(N\), how large must a common radius be so that the first
\(N\) centers \(a_1x,\ldots,a_Nx\) cover every target point?  Equivalently, we
study the largest hole left by the first \(N\) approximants.  Thus the scale
\[
\frac{\log N}{N}
\]
appears here not as a shrinking-target summability threshold, but as the
finite-time extremal scale for uniform approximation of all targets.

Lacunary sequences have long served as a bridge between deterministic harmonic
analysis and probabilistic behaviour.  We refer to Aistleitner, Berkes and
Tichy~\cite{ABT} for a broad survey, and to Katznelson~\cite{Katznelson} for
Fourier-analytic background.  Alon and Peres~\cite{AlonPeres} developed a
harmonic-analysis method for uniform dilations.  Konyagin--Ruzsa--Schlag
~\cite{KRS} studied uniform distribution properties of dilates of finite
integer sequences, a closely related finite-set dilation problem on the torus.
Peres and Schlag~\cite{PeresSchlag} studied lacunary sequences in connection
with two problems of Erd\H{o}s; their methods include scale separation and
rounding techniques useful in several lacunary problems.  Moshchevitin
~\cite{Moshchevitin} adapted related ideas to sublacunary sequences.

Another related, but distinct, line of work concerns Poissonian local
statistics for lacunary dilates.  Rudnick--Zaharescu~\cite{RZ} studied the
distribution of spacings between fractional parts of lacunary sequences and
proved Poissonian local correlations.  Such results describe local statistics
at microscopic scales, whereas the present paper studies the extremal
finite-time covering radius.

The works closest to the present paper are those of
Chow--Technau~\cite{ChowTechnau} and
Stefanescu~\cite{Stefanescu,StefanescuConvex}, which obtain upper bounds for
maximal gaps of dilated lacunary sequences.  Their conclusions hold for almost
every dilation parameter with respect to probability measures satisfying
suitable Fourier-decay assumptions.  In a complementary deterministic
direction, Stefanescu~\cite{Stefanescu} proved that every lacunary sequence
admits a dilation factor \(x\) for which
\[
G_N(x)=O_q\left(\frac{\log N}{N}\right).
\]
By contrast, the present paper proves both upper and lower bounds of order
\((\log N)/N\) for Lebesgue-almost every dilation factor \(x\).

\subsection{Overview of the method and organization}
\label{subsec:overview}

\Cref{sec:elementary} collects the elementary one-dimensional estimates used
throughout the paper.  The one-point estimate controls the discrepancy of
\(\ind_A(mx)\) on finite unions of intervals, while the local two-point
estimate bounds
\[
    \int_F \ind_J(ax)\ind_J(bx)\,dx
\]
in terms of the ratio \(a/b\) and the boundary complexity of \(F\).  These
estimates are the basic local mixing inputs for both the moving-target lower
bound and the block second-moment upper bound.

The lower bound, proved in \cref{sec:lower}, uses moving targets.  Instead of
fixing a gap and varying \(x\), we introduce a shift parameter and estimate the
\((x,t)\)-probability that all \(N\) points miss \(t+B\).  The main input is
\cref{lem:escape}, which gives
\[
    (1-\Leb_1(B))^N
    +
    O_{q,C}\left(\frac{(\log N)^3}{N}\right)
\]
for \(\Leb_1(B)\le C(\log N)/N\).  The proof uses the elementary mixing estimates
of \cref{sec:elementary} and exact Haar preservation on \(\bbT^2\).  This
product-space step is inspired by the scale-separation method of
Peres--Schlag~\cite{PeresSchlag}, but the moving target gives an explicit
avoidance recursion.

The upper bound, proved in \cref{sec:upper}, partitions the indices into main
blocks and buffers.  On each survivor set, Paley--Zygmund and the two-point
estimate remove a fixed proportion of points.  The off-diagonal contribution
is controlled by \(\Gamma\), which records the actual local ratio structure.
Iterating over \(\asymp\log N\) blocks gives \eqref{eq:main-limsup}.

Finally, \cref{sec:divisible} proves \cref{thm:divisible}.  The sharp upper
bound separates intervals with short multiplicative returns from regular
intervals.  For the lower bound, the divisibility chain supplies independent
mixed-radix digits; a sparse family of intervals with no short returns and a
local-dependence estimate then give an empty interval at every scale
\(\tau(\log N)/N\) with \(\tau<1\).
\section{Elementary one-dimensional estimates}\label{sec:elementary}

\begin{lemma}[One-point mixing]\label{lem:onepoint}
    Let \(F\subset\bbT\) be a finite union of intervals, and let
    \(A\subset\bbT\) be measurable. Then, for every integer \(m\ge1\),
    \begin{equation}\label{eq:onepoint}
        \biggl|\int_F \ind_A(mx)\,dx-\Leb_1(F)\,\Leb_1(A)\biggr|
        \le \frac{\#\partial F}{m}.
    \end{equation}
\end{lemma}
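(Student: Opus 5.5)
The plan is to reduce to a single interval \(F=[\alpha,\beta)\) and exploit the exact invariance of Lebesgue measure under the map \(x\mapsto mx\) on \(\bbT\).

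\textbf{Single interval.} Fix \(F=[\alpha,\beta)\). Partition \(\bbT\) into the \(m\) arcs \(I_j=[j/m,(j+1)/m)\), \(j=0,\dots,m-1\). On each \(I_j\) the map \(x\mapsto mx\) is an affine bijection onto \(\bbT\) with Jacobian \(m\). Hence
\[
\int_{I_j}\ind_A(mx)\,dx=\frac{\Leb_1(A)}{m}.
\]
Let \(k\) be the number of arcs \(I_j\) fully contained in \(F\). Then \(F\) is the union of these \(k\) arcs together with at most two partial arcs, one at each endpoint.

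\textbf{Error from the partial arcs.} Each partial arc \(P\) satisfies
\[
0\le\int_P \ind_A(mx)\,dx\le \frac1m,
\qquad
0\le \Leb_1(P)\Leb_1(A)\le \frac1m,
\]
so each partial arc contributes a difference of absolute value at most \(1/m\). It is convenient to write
\[
\int_F\ind_A(mx)\,dx-\Leb_1(F)\Leb_1(A)=\sum_{P}\Bigl(\int_P\ind_A(mx)\,dx-\Leb_1(P)\Leb_1(A)\Bigr).
\]
The full arcs cancel exactly, since on each of them \(\int_{I_j}\ind_A(mx)\,dx=\Leb_1(I_j)\Leb_1(A)\). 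There are at most two partial pieces, one per endpoint, and each contributes at most \(1/m\) in absolute value. This gives the bound \(2/m=\#\partial F/m\). The case where \(F\) lies inside a single arc \(I_j\) is covered in the same way: there is then one partial piece of length at most \(1/m\), so the error is at most \(1/m\le 2/m\). If \(F=\bbT\), there are no endpoints and the error is exactly \(0\).

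\textbf{General \(F\).} Write \(F\) as a disjoint union of maximal intervals \(F_1,\dots,F_r\), so that \(\#\partial F=2r\) (assuming \(F\neq\bbT\)). By linearity in \(F\) of both \(\int_F\ind_A(mx)\,dx\) and \(\Leb_1(F)\Leb_1(A)\), summing the single-interval bounds gives
\[
\biggl|\int_F \ind_A(mx)\,dx-\Leb_1(F)\,\Leb_1(A)\biggr|\le \sum_{i=1}^r\frac{2}{m}=\frac{\#\partial F}{m}.
\]

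\textbf{Potential obstacle.} The only delicate point is bookkeeping: assigning each partial arc to a distinct endpoint, so that an interval with both endpoints in the same arc \(I_j\) is not double-charged beyond \(2/m\). The disjoint-piece decomposition above handles this directly. There is no real analytic difficulty.
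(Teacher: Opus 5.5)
Your argument is correct and is essentially the paper's proof. Both partition \(\bbT\) into the \(m\) cells \([j/m,(j+1)/m)\), note that cells contained in \(F\) or in \(F^c\) contribute zero discrepancy, and charge at most \(1/m\) to each remaining cell.

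The one slip is the identity \(\#\partial F=2r\). It fails when two maximal intervals share an endpoint or when a maximal interval is a single point. For example, \(F=[0,0.3)\cup(0.3,0.5)\) has \(r=2\) but \(\#\partial F=3\), so your sum gives \(4/m\) rather than the required \(3/m\).

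There are two easy fixes. The left-hand side depends on \(F\) only up to null sets, so you can first pass to an a.e.\ equal union of arcs with pairwise disjoint closures, whose \(2r'\) distinct endpoints all lie in \(\partial F\). Alternatively, as the paper does, count directly the cells meeting \(\partial F\). By connectedness, every cell not contained in \(F\) or \(F^c\) meets \(\partial F\), and distinct cells are disjoint, so there are at most \(\#\partial F\) such cells.
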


\begin{proof}
    Partition \(\bbT\) into \(I_r=[\frac r m,\frac{r+1}{m})\), \(0\le r<m\).
    On each \(I_r\), the map \(x\mapsto mx\pmod1\) maps \(I_r\) linearly onto
    \(\bbT\).  If \(I_r\) is contained in \(F\) or in \(F^c\), the contribution is
    exact.  At most \(\#\partial F\) cells meet \(\partial F\), and on each such cell the
    absolute discrepancy is at most \(1/m\).
\end{proof}

\begin{lemma}[Local two-point estimate]\label{lem:twopoint}
    Let \(F\subset\bbT\) be a finite union of intervals.  Let
    \(J\subset\bbT\) be an interval of length \(s\le1/2\).  For integers
    \(1\le a<b\), one has
    \begin{equation}\label{eq:twopoint}
        \int_F \ind_J(ax)\,\ind_J(bx)\,dx
        \le
        \Leb_1(F)\Bigl(s^2+s\frac ab\Bigr)
        + \#\partial F\,\frac{s}{a}.
    \end{equation}
\end{lemma}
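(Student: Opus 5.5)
We would prove \cref{lem:twopoint} by conditioning on the coarser variable \(ax\) and using the fact that, inside a single fibre of \(x\mapsto ax\), the point \(bx\) runs linearly over a long arc. First we partition \(\bbT\) into the \(a\) cells \(I_r=[\frac ra,\frac{r+1}a)\), \(0\le r<a\). As in the proof of \cref{lem:onepoint}, at most \(\#\partial F\) cells meet \(\partial F\). On each such boundary cell we bound the integrand by \(\ind_J(ax)\). Since \(x\mapsto ax\) maps \(I_r\) linearly onto \(\bbT\) with Jacobian \(a\), we get \(\int_{I_r}\ind_J(ax)\,dx=s/a\). These cells therefore contribute at most \(\#\partial F\,s/a\), which is the last term of \eqref{eq:twopoint}.

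The main step is the uniform estimate
\[
\int_{I_r}\ind_J(ax)\ind_J(bx)\,dx\le \frac1a\Bigl(s^2+s\frac ab\Bigr)\qquad\text{for every }r.
\]
Summing it over the at most \(a\Leb_1(F)\) cells contained in \(F\) gives the first term. Write \(J=[c,c+s)\). Inside \(I_r\), the condition \(ax\in J\) (mod 1) picks out a single subinterval \(K\subset I_r\) of length \(s/a\), using \(s\le 1/2<1\). On \(K\) the map \(x\mapsto bx\) is linear with slope \(b\), so \(bx\) sweeps an arc of length \(bs/a\) on \(\bbT\). We then need to bound the measure of \(\{x\in K: bx\in J\}\). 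This is \(1/b\) times the number of preimages of \(J\) under the wrapping of an interval of length \(L=bs/a\) around the circle, weighted by the length of the overlap. An interval of length \(L\) meets at most \(\lfloor L\rfloor+2\) translates \(J+k\), \(k\in\bbZ\). Using the overlaps carefully, the total overlap length is at most \(sL+s=s(bs/a)+s\): the full periods contribute \(s\) each, and the leftover partial period contributes at most \(s\). Dividing by \(b\) gives
\[
\Leb_1\{x\in K: bx\in J\}\le \frac{s^2}{a}+\frac sb=\frac1a\Bigl(s^2+s\frac ab\Bigr),
\]
which is the claimed per-cell bound.

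The main obstacle is the count in this step. We have to check that the overlap of a length-\(L\) interval with \(\bigcup_k (J+k)\) is at most \(s\lceil L\rceil\le sL+s\), and not \(sL+2s\). To see this, cut the interval into \(\lceil L\rceil\) consecutive pieces of length at most \(1\). Each piece meets \(J+\bbZ\) in total length at most \(s\), since \(J+\bbZ\) has density \(s\) per unit and a piece of length at most \(1\) sees at most a measure-\(s\) portion of it. This gives the bound \(s\lceil L\rceil\le s(L+1)\). Finally, summing over interior cells (at most \(a\Leb_1(F)\) of them) and boundary cells yields \eqref{eq:twopoint}.
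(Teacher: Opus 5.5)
Your strategy is the paper's: cut $\bbT$ into the $a$ cells on which $x\mapsto ax$ is a bijection, and bound the at most $\#\partial F$ boundary cells by $s/a$ each. On interior cells you bound the overlap of an interval of length $L=bs/a$ with $J+\bbZ$ by $s\lceil L\rceil\le sL+s$. The gap is your choice of the \emph{unshifted} cells $I_r=[\frac ra,\frac{r+1}a)$.

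The claim that $\{x\in I_r: ax\in J\}$ is a single interval $K$ is false whenever $J$ contains $0$ in its interior, i.e.\ $c+s>1$ with $c\in[0,1)$. The bound $s\le1/2$ does not rule this out. In that case the preimage is two pieces, one at each end of $I_r$. Unless $b/a\in\bbZ$, their images under $x\mapsto bx$ need not be adjacent. Each piece then contributes its own partial-period excess, and together these can exceed $s$. Treating the pieces separately only gives $\frac{s^2}{a}+\frac{2s}{b}$ per cell.

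In fact the per-cell estimate you state as your main step is false. Take $a=2$, $b=21$, $J=[-\frac1{19},\frac2{19})$ (so $s=\frac3{19}$) and $I_0=[0,\frac12)$. The two pieces are $[0,\frac1{19})$ and $[\frac9{19},\frac12)$. Their images $[0,1+\frac2{19})$ and $[9+\frac{18}{19},10+\frac12)$ meet $J+\bbZ$ in lengths $\frac5{19}$ and $\frac3{19}$. Hence
\[
\int_{I_0}\ind_J(2x)\,\ind_J(21x)\,dx=\frac{8}{399}=\frac{304}{15162}>\frac{303}{15162}=\frac1a\Bigl(s^2+s\frac ab\Bigr).
\]
The lemma itself is not contradicted, since boundary terms absorb the excess. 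But summing your per-cell bound over interior cells is not justified as written.

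The repair is to align the partition with $J$, as the paper does. Write $J=\xi+[0,s)$ and use the shifted cells $I_r^\xi=[\frac{r+\xi}a,\frac{r+\xi+1}a)$. On each such cell, $ax\in J$ holds exactly on the initial segment of length $s/a$, so your single-interval overlap count $s\lceil L\rceil\le sL+s$ applies verbatim. The shifted cells still map bijectively under $x\mapsto ax$. At most $a\Leb_1(F)$ of them lie inside $F$, and at most $\#\partial F$ of them meet $\partial F$. With this change your argument coincides with the paper's proof.
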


\begin{proof}
    Write \(J=\xi+[0,s)\pmod1\), with \(\xi\in[0,1)\), and partition
    \(\bbT\) into the shifted \(a\)-adic intervals
    \[
        I_r^\xi
        =
        \left[\frac{r+\xi}{a},\frac{r+\xi+1}{a}\right)\pmod1,
        \qquad 0\le r<a.
    \]
    We first estimate the contribution of those cells \(I_r^\xi\) which are
    contained in \(F\).  On such a cell write
    \[
        x=\frac{r+\xi+y}{a}\pmod1,
        \qquad 0\le y<1.
    \]
    Then
    \[
        ax\equiv \xi+y\pmod1,
        \qquad
        bx-\xi\equiv Ry+\theta_r\pmod1,
        \qquad R=\frac ba,
    \]
    for a phase \(\theta_r\in\bbT\).  Consequently, the two conditions
    \(ax\in J\) and \(bx\in J\) are equivalent to
    \[
        y\in[0,s),
        \qquad
        Ry+\theta_r\in[0,s)\pmod1.
    \]
    Thus the contribution from a full cell \(I_r^\xi\subset F\) is
    \[
        \frac1a\,
        \bigl|\{0\le y<s:Ry+\theta_r\in[0,s)\pmod1\}\bigr|.
    \]

    As \(y\) ranges over \([0,s)\), the variable \(z=Ry+\theta_r\) ranges over
    an interval \(K\subset\bbR\) of length \(Rs\).  Since the periodic
    set \([0,s)+\bbZ\) has measure \(s\) in each unit interval, every
    interval \(K\) of length \(L\) satisfies
    \[
        \Leb_1\bigl(K\cap([0,s)+\bbZ)\bigr)\le Ls+s.
    \]
    Taking \(L=Rs\) and changing variables back from \(z\) to \(y\) gives
    \[
        \Leb_1\bigl(\{0\le y<s:Ry+\theta_r\in[0,s)\pmod1\}\bigr)
        \le
        \frac1R(Rs^2+s)
        =s^2+s\frac ab.
    \]
    Hence the full cells contribute at most
    \[
        \Leb_1(F)\left(s^2+s\frac ab\right).
    \]

    At most \(\#\partial F\) shifted cells meet \(\partial F\).  On each such
    cell, the condition \(ax\in J\) restricts \(x\) to a set of measure at
    most \(s/a\).  Their total contribution is therefore at most
    \(\#\partial F\,s/a\), which proves~\eqref{eq:twopoint}.
\end{proof}
\section{Moving targets and the lower bound}\label{sec:lower}

Let $B\subset\bbT$ be a union of at most two intervals, and write
$b=\Leb_1(B)$.  For $n\ge1$, define
\begin{equation}\label{eq:En-def}
    E_n(B)=\{(x,t)\in\bbT^2: a_nx-t\in B\},
\end{equation}
and
\begin{equation}\label{eq:PN-def}
    P_N(B)=\Leb_2\!\left(\bigcap_{n=1}^N E_n(B)^c\right).
\end{equation}

\begin{remark}\label{rem:PS}
    The moving-target formalism developed below is inspired by the ``scale
    separation'' technique of Peres and Schlag~\cite[Section~3]{PeresSchlag},
    who studied Diophantine approximation properties of lacunary sequences.
    In that work, for a fixed target $B$, the event $\{a_nx\in B\}$ is
    analysed by splitting the index set into blocks separated by buffers of
    length~$h$, and controlling the boundary complexity of past constraints
    via the geometric decay $a_{n-h}\le q^{-h}a_n$.  Our contribution in
    \cref{lem:escape} is twofold: we formulate the problem on the
    product space $\bbT^2$ with a \emph{moving} target $t+B$, which converts
    the one-dimensional avoidance problem into a joint $(x,t)$-avoidance
    problem with exact Haar measure preservation; and we obtain a sharp
    recursion $|P_k-(1-b)P_{k-1}|\le 2hb^2+O_q(q^{-h})$ that iterates to the
    clean estimate~\eqref{eq:escape}.  The integer determinant argument
    ($a_k-a_j\neq0$) guaranteeing $\Leb_2(E_j\cap E_k)=b^2$ is the same
    algebraic fact used by Peres and Schlag to control intersections of
    dilated level sets.
\end{remark}

\begin{lemma}[Moving-target escape]\label{lem:escape}
    Let
    \[
        h=\bigl\lceil \tfrac{10\log N}{\log q}\bigr\rceil,
    \]
    and assume $h<N$. If $B$ is a union of at most two intervals and
    $b=\Leb_1(B)$, then
    \begin{equation}\label{eq:escape}
        \bigl|P_N(B)-(1-b)^N\bigr|
        \le
        hb+2Nhb^2+\frac{4q}{q-1}\,Nq^{-h}.
    \end{equation}
    In particular, if $b\le C_0(\log N)/N$, then
    \begin{equation}\label{eq:escape-special}
        P_N(B)=(1-b)^N+O_{q,C_0}\!\left(\frac{(\log N)^3}{N}\right).
    \end{equation}
\end{lemma}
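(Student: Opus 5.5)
We prove the one-step recursion announced in \cref{rem:PS},
\[
    \bigl|P_k-(1-b)P_{k-1}\bigr|\le 2hb^2+\frac{4q}{q-1}q^{-h},
    \qquad h<k\le N,
\]
and then iterate it, where $P_k=P_k(B)$. We first record two exact facts. The map $(x,t)\mapsto(x,a_kx-t)$ preserves $\Leb_2$, so $\Leb_2(E_k)=b$. For $j<k$, the map $(x,t)\mapsto(a_jx-t,\,a_kx-t)$ is given by an integer matrix with determinant $a_j-a_k\ne0$. It is therefore a $|a_k-a_j|$-to-one surjective endomorphism of $\bbT^2$ that preserves Haar measure, and hence $\Leb_2(E_j\cap E_k)=b^2$.

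Write $S_k=\bigcap_{n\le k}E_n^c$. Then
\[
    P_{k-1}-P_k=\Leb_2(S_{k-1}\cap E_k).
\]
We split $S_{k-1}=S'\cap S''$, where $S'=\bigcap_{n\le k-h-1}E_n^c$ is the far past and $S''=\bigcap_{k-h\le n\le k-1}E_n^c$ is the buffer. Since $\Leb_2(S'\setminus S_{k-1})\le\sum_{k-h\le n<k}\Leb_2(S'\cap E_n)$, the buffer changes the quantities we need as follows.
\begin{itemize}
    \item $\Leb_2(S'\cap E_k)$ and $\Leb_2(S_{k-1}\cap E_k)$ differ by at most $\sum_{n}\Leb_2(E_n\cap E_k)=hb^2$.
    \item $\Leb_2(S')$ and $P_{k-1}$ differ by at most $\sum_n\Leb_2(S'\cap E_n)$.
\end{itemize}
The main step is the far-past decoupling
\[
    \bigl|\Leb_2(S'\cap E_k)-b\,\Leb_2(S')\bigr|\le\frac{2q}{q-1}q^{-h}.
\]
Applying the same decoupling with $n$ in place of $k$ gives $\Leb_2(S'\cap E_n)=b\,\Leb_2(S')+O(q^{-h})$, which is at most $b+O(q^{-h})$. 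Combining these yields $|P_{k-1}-P_k-bP_{k-1}|\le 2hb^2+O(q^{-h})$. The constants $hb^2+hb\cdot b$ give $2hb^2$. If the $O(q^{-h})$ terms are summed over the buffer they pick up a factor $h$; absorbing $h$ would need $q^{-h}$ to be replaced by something like $q^{-h/2}$. Since $q^{-h}\le N^{-10}$, there is room to spare, so we expect the stated constants to work after bookkeeping, or at worst to hold with harmless changes.

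To prove the decoupling, we fix $t$ and work with the $x$-section. The set $F_t=\{x:(x,t)\in S'\}$ is an intersection of preimages of $t+B^c$ under $x\mapsto a_nx$ with $n\le k-h-1$. Each such preimage is a union of at most $2a_n$ intervals, so
\[
    \#\partial F_t\le 4\sum_{n\le k-h-1}a_n\le\frac{4q}{q-1}\,a_{k-h-1}.
\]
\cref{lem:onepoint} with $A=t+B$ and $m=a_k$ then gives
\[
    \Bigl|\int_{F_t}\ind_{t+B}(a_kx)\,dx-b\,\Leb_1(F_t)\Bigr|\le \frac{4q}{q-1}\,\frac{a_{k-h-1}}{a_k}\le\frac{4q}{q-1}q^{-h-1}.
\]
Integrating over $t$ proves the decoupling. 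We expect this boundary-complexity count to be the main obstacle: the dependence of the past on $t$ must be handled fibrewise, since \cref{lem:onepoint} applies to a single $F$.

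For small $k\le h$ we use only the trivial bound $|P_k-(1-b)P_{k-1}|\le b$. Telescoping through
\[
    P_N-(1-b)^N=\sum_{k}(1-b)^{N-k}\bigl(P_k-(1-b)P_{k-1}\bigr),
\]
with the weights $(1-b)^{N-k}\le1$, gives a total of
\[
    hb+N\Bigl(2hb^2+\frac{4q}{q-1}q^{-h}\Bigr),
\]
which is \eqref{eq:escape}.

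For \eqref{eq:escape-special}, take $b\le C_0(\log N)/N$ and $h\asymp\log N$. Then $hb=O((\log N)^2/N)$ and $Nhb^2=O((\log N)^3/N)$. Finally $Nq^{-h}\le N^{-9}$, which is negligible.
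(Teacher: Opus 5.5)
Your proof follows the paper's own argument. It uses the same split of the past into a remote part and a buffer of length about $h$, and the same fibrewise use of \cref{lem:onepoint} with boundary count $\frac{4q}{q-1}\,q^{-h}a_k$. It also uses the same exact identity $\Leb_2(E_j\cap E_k)=b^2$ from the integer determinant, and the same telescoping with the trivial per-step bound for $k\le h$.

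One step is wrong: the claim that the decoupling applies to a buffer index $n$ in place of $k$ with error $O(q^{-h})$. Your far past ends at $k-h-1$, so for $n=k-h$ the error from \cref{lem:onepoint} is $\frac{4q}{q-1}\,a_{k-h-1}/a_{k-h}$. This is of order one (up to $\frac{4}{q-1}$), not $q^{-h}$, so that route would only give $\Leb_2(S'\cap E_n)\le b+O_q(1)$.

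You do not need that step at all. Trivially $\Leb_2(S'\cap E_n)\le\Leb_2(E_n)=b$, which is the paper's \eqref{eq:buf2}. Hence $0\le\Leb_2(S')-P_{k-1}\le hb$ with no additional error term. The recursion then holds with exactly $2hb^2+\frac{4q}{q-1}q^{-h-1}\le 2hb^2+\frac{4q}{q-1}q^{-h}$, so the constants in \eqref{eq:escape} follow directly. Your worry about an extra factor $h$ on the $q^{-h}$ term, and the hedging about constants, are unnecessary.

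A minor point: the decoupling constant you actually derive is $\frac{4q}{q-1}q^{-h-1}$. This lies below the $\frac{2q}{q-1}q^{-h}$ you first announce only when $q\ge2$, so quote the derived constant instead.
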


\begin{proof}
    Work on the probability space $(\bbT^2,\mathcal B,\PP)$ where
    $\PP=\Leb_2$ is the normalised Haar measure.
    For $n\ge1$ set
    \[
        E_n=\{(x,t)\in\bbT^2: a_nx-t\in B\},\qquad
        \PP(E_n)=\Leb_1(B)=b .
    \]
    Define the \emph{survivor set} after $k$ steps and its measure
    \[
        S_k=\bigcap_{n=1}^k E_n^c,\qquad
        P_k=\PP(S_k).
    \]

    Fix $k>h$.  Split the index set $\{1,\dots,k-1\}$ into the
    \emph{remote past} (indices at least $h$ steps away) and the
    \emph{near past} (the buffer of length $h-1$):
    \[
        R_k=\bigcap_{j\le k-h}E_j^c,\qquad
        L_k=\bigcap_{k-h<j<k}E_j^c .
    \]
    We have the exact identities
    \begin{equation}\label{eq:id1}
        S_{k-1}=R_k\cap L_k,\qquad
        R_k=(R_k\cap L_k)\;\cup\;(R_k\cap L_k^c),
    \end{equation}
    and $L_k^c=\bigcup_{k-h<j<k}E_j$.
The proof is based on a one-step comparison between the true survival
probability and the independent model.  More precisely, we shall prove that,
for every \(k>h\),
\[
    |P_k-(1-b)P_{k-1}|
    \le
    2hb^2+\frac{4q}{q-1}q^{-h}.
\]
This says that, conditionally on survival up to time \(k-1\), the new event
\(E_k\) behaves like an event of probability \(b\), up to an error caused by
the last \(h\) previous indices and by the remote-past boundary.

The reason for separating the past into a remote part and a short buffer is
as follows.  If one applied the one-point mixing estimate directly to
\(S_{k-1}\), the boundary of \(S_{k-1}\) would involve the recent dilates
\(a_{k-1},a_{k-2},\ldots\), and the resulting boundary error would not be
small compared with \(a_k\).  We therefore apply the mixing estimate only to
the remote survivor set \(R_k\), whose boundary is smaller than \(a_k\) by the
factor \(q^{-h}\).  The omitted buffer \(L_k\) is then restored by elementary
union bounds and by the exact pairwise identity
\[
    \PP(E_j\cap E_k)=b^2 .
\]
After proving the one-step estimate, we iterate it from \(h\) to \(N\).

    \medskip
    \noindent\textbf{1.  Boundary complexity of the remote past.}
    For a fixed $t$, let $(R_k)_t=\{x:(x,t)\in R_k\}$ be the $t$-fibre
    of $R_k$.  Since $B$ is a union of at most two intervals, it has at
    most four boundary points; each constraint $a_jx\notin t+B$ contributes
    at most $4a_j$ boundary points to $(R_k)_t$.  Hence
    \begin{equation}\label{eq:bdy}
        \#\partial(R_k)_t\le 4\sum_{j\le k-h}a_j
        \le 4\frac q{q-1}a_{k-h}
        \le \frac{4q}{q-1}\,q^{-h}a_k .
    \end{equation}

    \medskip
    \noindent\textbf{2.  Mixing over the remote past.}
    For fixed $t$, \cref{lem:onepoint} applied with $m=a_k$, survivor set
    $(R_k)_t$, and target $t+B$ gives the unnormalised estimate
    \[
        \left|
        \Leb_x\bigl((R_k)_t\cap\{x:a_kx-t\in B\}\bigr)
        -b\Leb_x((R_k)_t)
        \right|
        \le\frac{\#\partial(R_k)_t}{a_k}.
    \]
    This formulation also covers fibres of measure zero.  Integrating over
    $t\in\bbT$ yields the unconditional estimate
    \begin{equation}\label{eq:cond}
        \PP(R_k\cap E_k)=b\,\PP(R_k)+O_q(q^{-h}),
    \end{equation}
    where $|O_q(q^{-h})|\le \frac{4q}{q-1}q^{-h}$ by~\eqref{eq:bdy} and the
    explicit constant in \cref{lem:onepoint}.

    \medskip
    \noindent\textbf{3.  Exact pairwise independence.}
    For $j\neq k$, the map
    \[
        (x,t)\longmapsto (a_jx-t,\;a_kx-t)
    \]
    is a linear endomorphism of $\bbT^2$ with integer matrix
    $\bigl(\begin{smallmatrix} a_j & -1 \\ a_k & -1 \end{smallmatrix}\bigr)$.
    Its determinant is $a_k-a_j\neq0$, so the map is surjective and
    preserves Haar measure.  Consequently
    \begin{equation}\label{eq:pairwise}
        \PP(E_j\cap E_k)=\PP(E_j)\,\PP(E_k)=b^2 .
    \end{equation}
    Thus $E_j$ and $E_k$ are \emph{exactly pairwise independent} under
    $\PP$, despite the strong dependence among triples.

    \medskip
    \noindent\textbf{4.  Buffer estimates.}
    All bounds needed for the recursion are collected here.
    \begin{align}
        \PP(L_k^c)                &\le hb, \label{eq:buf1}\\
        \PP(R_k\cap L_k^c)        &\le hb, \label{eq:buf2}\\
        \PP(E_k\cap L_k^c)
           &\le\sum_{j=k-h+1}^{k-1}\PP(E_k\cap E_j)\le hb^2, \label{eq:buf3}\\
        \PP(R_k\cap E_k\cap L_k^c)&\le hb^2 . \label{eq:buf4}
    \end{align}
    Inequality~\eqref{eq:buf1} is the union bound over the $h-1$ events
    $E_j$ composing $L_k^c$; \eqref{eq:buf2} follows because
    $R_k\cap L_k^c\subseteq L_k^c$; \eqref{eq:buf3} uses the exact
    pairwise identity~\eqref{eq:pairwise}; \eqref{eq:buf4} is immediate
    from \eqref{eq:buf3}.

    From the disjoint decomposition in~\eqref{eq:id1} and~\eqref{eq:buf2},
    \begin{equation}\label{eq:remote-diff}
        \bigl|\PP(R_k)-\PP(R_k\cap L_k)\bigr|
        =\PP(R_k\cap L_k^c)\le hb .
    \end{equation}

    \medskip
    \noindent\textbf{5.  Recursion for the survival probability.}
    The quantity of interest is the conditional survival probability
    at step~$k$ given survival up to step~$k-1$:
    \[
        \PP(E_k^c\mid S_{k-1})
        =\frac{P_k}{P_{k-1}}
        =1-\frac{\PP(S_{k-1}\cap E_k)}{P_{k-1}} .
    \]
    We estimate the numerator $\PP(S_{k-1}\cap E_k)$.  Using
    $S_{k-1}=R_k\cap L_k$,
    \begin{align*}
        \PP(S_{k-1}\cap E_k)
        &=\PP(R_k\cap L_k\cap E_k) \\
        &=\PP(R_k\cap E_k)-\PP(R_k\cap E_k\cap L_k^c) \\
        &=b\,\PP(R_k)-\PP(R_k\cap E_k\cap L_k^c)+O_q(q^{-h})\\
        &=b\,\PP(R_k\cap L_k)
           +b\bigl(\PP(R_k)-\PP(R_k\cap L_k)\bigr)
           -\PP(R_k\cap E_k\cap L_k^c)+O_q(q^{-h}) .
    \end{align*}
    The second equality uses the disjoint decomposition
    \[
        R_k\cap E_k=(R_k\cap L_k\cap E_k)\cup(R_k\cap E_k\cap L_k^c).
    \]
    The third equality uses~\eqref{eq:cond}.  The first term is $bP_{k-1}$
    by~\eqref{eq:id1}.  The remaining three
    terms are controlled by~\eqref{eq:remote-diff}, \eqref{eq:buf4}, and
    the bound on $O_q(q^{-h})$:
    \[
        \bigl|\PP(S_{k-1}\cap E_k)-bP_{k-1}\bigr|
        \le b\cdot hb + hb^2 + \frac{4q}{q-1}q^{-h}
        =2hb^2+\frac{4q}{q-1}q^{-h}.
    \]
    Therefore the one-step recursion is \begin{equation}\label{eq:recur}
        |P_k-(1-b)P_{k-1}|
        =\bigl|\PP(S_{k-1}\cap E_k)-bP_{k-1}\bigr|
        \le 2hb^2+\frac{4q}{q-1}q^{-h},\qquad k>h.
    \end{equation}

    \medskip
    \noindent\textbf{6.  Base case and iteration.}
    For the first $h$ steps, the union bound gives
    $1-hb\le P_h\le1$, while the elementary inequality
    $(1-b)^h\ge 1-hb$ yields $1-hb\le(1-b)^h\le1$.  Hence
    $|P_h-(1-b)^h|\le hb$.  Iterating~\eqref{eq:recur} from $k=h+1$
    to $N$,
    \begin{align*}
        |P_N-(1-b)^N|
        &\le |P_h-(1-b)^h|+\sum_{k=h+1}^N|P_k-(1-b)P_{k-1}|\\
        &\le hb+2Nhb^2+\frac{4q}{q-1}Nq^{-h},
    \end{align*}
    which is exactly~\eqref{eq:escape}.  The second statement follows by
    substituting $b\le C_0(\log N)/N$ and $q^{-h}\le N^{-10}$.
\end{proof}

\begin{equation}\label{eq:ell-def}
    \ell_N(\tau)=\tau\frac{\log N}{N},\qquad
    M_N(\tau)=\bigl\lceil \ell_N(\tau)^{-1}\bigr\rceil,\qquad
    s_N(\tau)=M_N(\tau)^{-1}.
\end{equation}
Fix \(0<\tau<1/2\), and put
\[
    M=M_N(\tau),\qquad s=s_N(\tau)=M^{-1}.
\]
For \(\theta\in[0,s)\), define the shifted partition
\begin{equation}\label{eq:partition}
    \mathcal P_N^\theta(\tau)
    =
    \{I_j^\theta=\theta+[js,(j+1)s)\pmod1:\ 0\le j<M\}.
\end{equation}
Let
\begin{equation}\label{eq:K-def}
    K_N^{(\tau)}(x,\theta)
    =
    \#\{0\le j<M:\ I_j^\theta\cap A_N(x)=\varnothing\}.
\end{equation}

For the second-moment argument it is important to work on a fixed auxiliary
probability space.  We therefore write
\[
    \theta=us,\qquad u\in[0,1),
\]
and define
\begin{equation}\label{eq:Khat-def}
    \widehat K_N^{(\tau)}(x,u)
    =
    K_N^{(\tau)}(x,us).
\end{equation}
All expectations and probabilities below are taken with respect to the product
measure \(dx\,du\) on \(\bbT\times[0,1)\).

\begin{proposition}[Terminal empty cells]\label{prop:terminal}
    Let \(0<\tau<1/2\) and \(\alpha>1\).  Put
    \[
        N_m=\lfloor\alpha^m\rfloor .
    \]
    Then, for Lebesgue-almost every \(x\in\bbT\), for all sufficiently
    large \(m\) there exists \(\theta_m\in[0,s_{N_m}(\tau))\) such that
    \[
        K_{N_m}^{(\tau)}(x,\theta_m)\ge 2.
    \]
    Consequently,
    \[
        G_{N_m}(x)\ge s_{N_m}(\tau)
    \]
    for all sufficiently large \(m\).
\end{proposition}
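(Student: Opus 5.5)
The plan is a second-moment argument for \(\widehat K=\widehat K_N^{(\tau)}(x,u)\) on \(\bbT\times[0,1)\), followed by Borel--Cantelli along \(N_m\).

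\textbf{First moment.} For fixed \(j\) and \(u\), the event that \(I_j^{us}\) is empty depends on \(x\) only. To use \cref{lem:escape}, average over the shift: since \(\theta=us\) ranges over \([0,s)\) and \(j\) over \(0\le j<M\), the left endpoint \(t=\theta+js\) ranges uniformly over \(\bbT\). Hence
\[
\E\widehat K=M\,\Leb_2\bigl\{(x,t): a_nx\notin t+[0,s)\ \forall n\le N\bigr\}=M\,P_N(B),
\qquad B=-[0,s),
\]
after the substitution \(a_nx-t\in B\). \cref{lem:escape} with \(b=s\le\tau(\log N)/N\) gives
\[
P_N=(1-s)^N+O((\log N)^3/N)=N^{-\tau+o(1)}.
\]
Because \(\tau<1/2<1\), this term dominates the error, so \(\E\widehat K\asymp M N^{-\tau}\asymp N^{1-\tau}/\log N\to\infty\).

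\textbf{Second moment.} Write
\[
\E\widehat K^2=\E\widehat K+\sum_{j\ne j'}\PP(\text{both empty}).
\]
Averaging over \(u\), for each difference \(d=j'-j\) the pair term equals \(M\) times \(P_N(B_d)\), where \(B_d=-([0,s)\cup(ds+[0,s)))\). This \(B_d\) is a union of at most two intervals with \(b\le 2s\le 2\tau(\log N)/N\). \cref{lem:escape} therefore gives
\[
P_N(B_d)\le(1-2s)^N+O((\log N)^3/N)\le N^{-2\tau+o(1)}+O((\log N)^3/N).
\]
Summing over \(d\) yields
\[
\E\widehat K^2\le\E\widehat K+M^2N^{-2\tau+o(1)}+O\bigl(M^2(\log N)^3/N\bigr).
\]
Compare with \((\E\widehat K)^2= M^2N^{-2\tau+o(1)}\). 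The error term is negligible precisely when \((\log N)^3/N\ll N^{-2\tau}\), which is exactly where \(\tau<1/2\) is used. The \(o(1)\) exponents must be controlled carefully: \((1-s)^N=\exp(-Ns+O(Ns^2))\), and the leading factors match to \(1+o(1)\). Hence
\[
\Var\widehat K\le\E\widehat K+o\bigl((\E\widehat K)^2\bigr),
\]
with a quantitative rate \(N^{-\delta}\) available for some \(\delta>0\), since \(1-2\tau>0\) and \(\E\widehat K\ge N^{c}\).

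\textbf{Chebyshev and Borel--Cantelli.} Chebyshev gives
\[
\PP(\widehat K\le 1)\le\PP\bigl(\widehat K\le\tfrac12\E\widehat K\bigr)\le 4\Var\widehat K/(\E\widehat K)^2=O(N^{-\delta}).
\]
Along \(N_m=\lfloor\alpha^m\rfloor\) these probabilities are summable. By Borel--Cantelli in \((x,u)\), for almost every \((x,u)\) we have \(\widehat K_{N_m}\ge2\) for all large \(m\). Fubini then gives almost every \(x\) for which some \(u\), and so some \(\theta_m=us\), works for all large \(m\); indeed a full-measure set of \(u\) works simultaneously. The consequence \(G_{N_m}\ge s\) holds because an empty half-open cell of length \(s\) is a gap. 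Having two empty cells, rather than one, only adds robustness.

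\textbf{Main obstacle.} The delicate step is the second-moment comparison. We must check that \(P_N(B_d)\) is at most \((1+o(1))P_N(B)^2\) uniformly in \(d\), including adjacent cells where \(B_d\) merges into a single interval of length \(2s\), and that the additive \((\log N)^3/N\) error is smaller than \(N^{-2\tau}\) by a polynomial factor. This is where the restriction \(\tau<1/2\), and hence the constant \(1/2\), arises.
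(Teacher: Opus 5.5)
Your proposal is correct and follows the paper's proof essentially step for step: the averaging identities $\E\widehat K=M P_N([0,s))$ and $\sum_j\E[X_jX_{j+r}]=M P_N(B_r)$ with two-cell targets, the escape lemma applied to both, the comparison $(1-2s)^N/(1-s)^{2N}=1+O((\log N)^2/N)$ together with $\tau<1/2$ to absorb the additive $(\log N)^3/N$ error, and then Chebyshev, Borel--Cantelli along $N_m$, and Fubini in $u$. The only slip is a harmless sign: the target should be $B=[0,s)$, not $-[0,s)$, since $a_nx\notin t+[0,s)$ means $a_nx-t\notin[0,s)$; this does not matter because the escape lemma depends only on $\Leb_1(B)$.
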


\begin{proof}
    Fix \(N\), and abbreviate \(M=M_N(\tau)\), \(s=s_N(\tau)\).  For
    \(0\le j<M\), set
    \[
        X_j(x,u)
        =
        \mathbf 1\!\left(
        I_j^{us}\cap A_N(x)=\varnothing
        \right).
    \]
    Then
    \[
        \widehat K_N^{(\tau)}(x,u)=\sum_{j=0}^{M-1}X_j(x,u).
    \]

    \medskip
    \noindent\textbf{1. First moment.}
    For \(t\in\bbT\), define
    \[
        f_0(t)
        =
        \Leb_1\{x\in\bbT:\ a_nx-t\notin[0,s)
        \text{ for every }1\le n\le N\}.
    \]
    For a fixed \(j\), the change of variables \(t=us+js\) gives
    \[
        \E[X_j]
        =
        \frac1s\int_0^s f_0(\theta+js)\,d\theta .
    \]
   Notice that for fixed \(j\), the variable \(t=\theta+js\) is not uniformly distributed over the whole circle; it ranges only over the single cell \([js,(j+1)s)\). The average over the whole circle appears only after summing over all \(j\). Indeed,
    \begin{align}
        \E[\widehat K_N^{(\tau)}]
        &=
        \sum_{j=0}^{M-1}\E[X_j] \nonumber\\
        &=
        \frac1s\sum_{j=0}^{M-1}
        \int_0^s f_0(\theta+js)\,d\theta \nonumber\\
        &=
        \frac1s\int_{\bbT} f_0(t)\,dt \nonumber\\
        &=
        M\,P_N([0,s)).
        \label{eq:terminal-first-exact}
    \end{align}
    By the moving-target escape lemma,
    \[
        P_N([0,s))
        =
        (1-s)^N
        +
        O_{q,\tau}\!\left(\frac{(\log N)^3}{N}\right).
    \]
    Moreover,
    \[
        \frac{M(\log N)^3/N}{M(1-s)^N}
        =
        \frac{(\log N)^3}{N(1-s)^N}
        =
        N^{-(1-\tau)+o(1)}.
    \]
    Hence the relative error is polynomially small:
    \begin{align}
        \E[\widehat K_N^{(\tau)}]
        &=
        M(1-s)^N
        +
        O_{q,\tau}\!\left(M\frac{(\log N)^3}{N}\right) \nonumber\\
        &=
        M(1-s)^N
        \left(1+O_{q,\tau}\bigl(N^{-(1-\tau)+o(1)}\bigr)\right).
        \label{eq:terminal-EK}
    \end{align}
    Since
    \[
        M=\frac1s=(1+o(1))\frac{N}{\tau\log N},
        \qquad
        Ns=(1+o(1))\tau\log N,
    \]
    we get
    \begin{equation}\label{eq:terminal-EK-asymp}
        \E[\widehat K_N^{(\tau)}]
        =
        \frac{N^{1-\tau+o(1)}}{\tau\log N}
        \longrightarrow\infty .
    \end{equation}

    \medskip
    \noindent\textbf{2. Second moment.}
    For \(0\le r<M\), define
    \[
        B_r=
        \begin{cases}
            [0,s), & r=0,\\[3pt]
            [0,s)\cup[rs,(r+1)s), & 1\le r<M.
        \end{cases}
    \]
    Thus \(|B_0|=s\), while \(|B_r|=2s\) for \(1\le r<M\).  For
    \(t\in\bbT\), put
    \[
        f_r(t)
        =
        \Leb_1\{x\in\bbT:\ a_nx-t\notin B_r
        \text{ for every }1\le n\le N\}.
    \]

    In the following, \(j+r\) is always interpreted modulo \(M\).  For fixed
    \(j\) and \(r\), the joint event \(X_j=X_{j+r}=1\) says that all the
    points \(a_nx\) avoid the two cells
    \[
        us+[js,(j+1)s)
        \quad\text{and}\quad
        us+[(j+r)s,(j+r+1)s)
    \]
    modulo one.  With \(t=us+js\), this is precisely the moving-target
    avoidance event with target \(B_r\).  Therefore
    \[
        \E[X_jX_{j+r}]
        =
        \frac1s\int_0^s f_r(\theta+js)\,d\theta .
    \]
    Summing over \(j\), the same averaging mechanism as in the first moment
    gives
    \[
        \sum_{j=0}^{M-1}\E[X_jX_{j+r}]
        =
        \frac1s\int_{\bbT} f_r(t)\,dt
        =
        M\,P_N(B_r).
    \]
    Hence
    \begin{equation}\label{eq:terminal-second-exact}
        \E[(\widehat K_N^{(\tau)})^2]
        =
        M\sum_{r=0}^{M-1}P_N(B_r).
    \end{equation}

    Applying the moving-target escape lemma to \(B_0\) and to the \(B_r\)'s
    with \(1\le r<M\), we obtain uniformly in \(r\)
    \[
        P_N(B_0)
        =
        (1-s)^N
        +
        O_{q,\tau}\!\left(\frac{(\log N)^3}{N}\right),
    \]
    and
    \[
        P_N(B_r)
        =
        (1-2s)^N
        +
        O_{q,\tau}\!\left(\frac{(\log N)^3}{N}\right),
        \qquad 1\le r<M.
    \]
    Therefore
    \begin{equation}\label{eq:terminal-EK2}
        \E[(\widehat K_N^{(\tau)})^2]
        \le
        M(1-s)^N
        +
        M^2(1-2s)^N
        +
        O_{q,\tau}\!\left(M^2\frac{(\log N)^3}{N}\right).
    \end{equation}

    \medskip
    \noindent\textbf{3. Variance estimate.}
    We compare \eqref{eq:terminal-EK2} with
    \((\E[\widehat K_N^{(\tau)}])^2\).  From the quantitative form of
    \eqref{eq:terminal-EK},
    \[
        (\E[\widehat K_N^{(\tau)}])^2
        =
        M^2(1-s)^{2N}
        \left(1+O_{q,\tau}\bigl(N^{-(1-\tau)+o(1)}\bigr)\right).
    \]
    The diagonal contribution satisfies
    \[
        \frac{M(1-s)^N}{M^2(1-s)^{2N}}
        =
        \frac1{M(1-s)^N}
        =
        N^{-(1-\tau)+o(1)}.
    \]
    For the main off-diagonal contribution, Taylor expansion gives
    \[
        \frac{(1-2s)^N}{(1-s)^{2N}}
        =
        \exp\!\left(
        N\log(1-2s)-2N\log(1-s)
        \right)
        =
        1+O_\tau\!\left(\frac{(\log N)^2}{N}\right).
    \]
    Finally, the error term from the escape lemma gives
    \[
        \frac{M^2(\log N)^3/N}
             {M^2(1-s)^{2N}}
        =
        \frac{(\log N)^3}{N(1-s)^{2N}}
        =
        N^{-(1-2\tau)+o(1)}.
    \]
    Since \(\tau<1/2\), we may choose
    \[
        \gamma=\frac12(1-2\tau)>0.
    \]
    Combining the preceding estimates yields
    \begin{equation}\label{eq:terminal-ratio}
        \frac{\E[(\widehat K_N^{(\tau)})^2]}
             {(\E[\widehat K_N^{(\tau)}])^2}
        \le
        1+O_{q,\tau}(N^{-\gamma}).
    \end{equation}
    Consequently,
    \begin{equation}\label{eq:terminal-variance}
        \Var(\widehat K_N^{(\tau)})
        \le
        O_{q,\tau}(N^{-\gamma})
        \bigl(\E[\widehat K_N^{(\tau)}]\bigr)^2.
    \end{equation}

    \medskip
    \noindent\textbf{4. Chebyshev and Borel--Cantelli.}
    By \eqref{eq:terminal-EK-asymp}, for all sufficiently large \(N\),
    \[
        \E[\widehat K_N^{(\tau)}]\ge4.
    \]
    Hence
    \[
        \widehat K_N^{(\tau)}<2
        \quad\Longrightarrow\quad
        \left|
        \widehat K_N^{(\tau)}
        -
        \E[\widehat K_N^{(\tau)}]
        \right|
        \ge
        \frac12\E[\widehat K_N^{(\tau)}].
    \]
    Chebyshev's inequality and \eqref{eq:terminal-variance} give
    \begin{equation}\label{eq:terminal-chebyshev}
        \PP\{\widehat K_N^{(\tau)}<2\}
        \le
        O_{q,\tau}(N^{-\gamma}).
    \end{equation}

    Now take \(N_m=\lfloor\alpha^m\rfloor\) for any \(\alpha>1\).  Since
    \[
        \sum_{m=1}^\infty N_m^{-\gamma}<\infty,
    \]
    the Borel--Cantelli lemma applied on the fixed probability space
    \(\bbT\times[0,1)\) implies that for almost every pair \((x,u)\),
    \[
        \widehat K_{N_m}^{(\tau)}(x,u)\ge2
    \]
    for all sufficiently large \(m\).

    By Fubini, for Lebesgue-almost every \(x\), this conclusion holds for
    almost every \(u\in[0,1)\).  Fix such an \(x\), and choose one admissible
    \(u\).  For all sufficiently large \(m\), set
    \[
        \theta_m=u\,s_{N_m}(\tau).
    \]
    Then
    \[
        K_{N_m}^{(\tau)}(x,\theta_m)
        =
        \widehat K_{N_m}^{(\tau)}(x,u)
        \ge2.
    \]

    In particular, at least one cell of
    \(\mathcal P_{N_m}^{\theta_m}(\tau)\) is empty, and every cell has circular
    length \(s_{N_m}(\tau)\).  Therefore
    \[
        G_{N_m}(x)\ge s_{N_m}(\tau)
    \]
    for all sufficiently large \(m\).  Moreover, since at most one cell of a
    shifted circular partition wraps around \(0\), the stronger conclusion
    \(K_{N_m}^{(\tau)}(x,\theta_m)\ge2\) also guarantees the existence of a
    non-wrapping empty interval if such an interval is needed later.
\end{proof}

\begin{proof}[Proof of~\eqref{eq:main-liminf}]
    Let \(0<c<1/2\).  Choose \(0<\tau<1/2\) and \(\alpha>1\) such that
    \[
        c<\frac{\tau}{\alpha}.
    \]
    By \cref{prop:terminal}, for Lebesgue-almost every \(x\),
    \[
        G_{N_m}(x)\ge s_{N_m}(\tau)
        =
        (1+o(1))\tau\frac{\log N_m}{N_m}
    \]
    for all sufficiently large \(m\), where \(N_m=\lfloor\alpha^m\rfloor\).

    Let \(N_m\le N<N_{m+1}\).  Since adding points can only decrease the
    maximal gap,
    \[
        G_N(x)\ge G_{N_{m+1}}(x).
    \]
    Therefore, for all sufficiently large \(m\),
    \[
        G_N(x)
        \ge
        (1+o(1))\tau\frac{\log N_{m+1}}{N_{m+1}}.
    \]
    Multiplying by \(N/\log N\), and using
    \[
        \frac{N}{N_{m+1}}\ge \frac1\alpha+o(1),
        \qquad
        \frac{\log N_{m+1}}{\log N}=1+o(1),
    \]
    we obtain
    \[
        \frac{N\,G_N(x)}{\log N}
        \ge
        \frac{\tau}{\alpha}+o(1).
    \]
    Hence
    \[
        \liminf_{N\to\infty}
        \frac{N\,G_N(x)}{\log N}
        \ge
        \frac{\tau}{\alpha}
        >
        c.
    \]
    Letting \(c\uparrow1/2\) through a countable sequence gives
    \[
        \liminf_{N\to\infty}
        \frac{N\,G_N(x)}{\log N}
        \ge
        \frac12
    \]
    for Lebesgue-almost every \(x\).
\end{proof}

\section{The upper bound with local correlation constant}\label{sec:upper}
In this section, we prove the refined upper bound in~\eqref{eq:main-limsup}.
   Since \(a_n/a_m\ge q^{n-m}\),
\begin{equation}\label{eq:Gamma-bound}
    \Gamma(p,L)
    \le
    \frac1L\sum_{r=1}^{L}(L-r+1)q^{-r}
    < \frac1{q-1}.
\end{equation}
\paragraph{Rounded Geometric progressions.}
If \(a_n=\lceil Ca^n \rceil\) with $a>1$ and $C>(a-1)^{-1}$, then $\Gamma=(a-1)^{-1}$, so \cref{thm:one-d} gives  
\[
    \limsup_{N\to\infty}\frac{NG_N(x)}{\log N}
    \le \frac{a+1}{a-1}.
\]
for Lebesgue-almost every \(x\). If, in addition,  \( a,C\in \bbZ\),  then \cref{thm:divisible} sharpens it to
\[
    \lim_{N\to\infty}\frac{NG_N(x)}{\log N}=1 \: \text{ a.e.}
\]

\paragraph{Alternating ratios.}
Suppose that for some integers $a,b>1$,
\begin{equation}\label{eq:ex-alternating}
    \frac{a_{n+1}}{a_n}
    =
    \begin{cases}
        a, & n\text{ odd},\\[2pt]
        b, & n\text{ even},
    \end{cases}
\end{equation}
Then
\[
    \Gamma
    =
    \frac{1/a+1/b+2/(ab)}{2(1-1/(ab))}.
\]
For example, if \(a=2\) and \(b=50\), then
\[
    \Gamma=\frac{3}{11},
    \qquad
    1+2\Gamma=\frac{17}{11}\approx1.545.
\]
The worst-ratio estimate with \(a=2\) would only give \(1+2/(a-1)=3\).

\medskip
Fix a target interval $J\subset\bbT$ of length
\begin{equation}\label{eq:s-def}
    s=\tau\frac{\log N}{N}.
\end{equation}
Let $\beta>0$, and set
\begin{equation}\label{eq:Lh-def}
    L=\bigl\lceil\frac{\beta}{s}\bigr\rceil,\qquad
    h=\bigl\lceil\frac{10\log N}{\log q}\bigr\rceil.
\end{equation}
Define blocks
\begin{equation}\label{eq:blocks-def}
    p_u=1+(u-1)(L+h),\qquad
    \Delta_u=\{p_u,p_u+1,\ldots,p_u+L-1\},
\end{equation}
and let $T=T_N$ be the largest $u$ with $\Delta_u\subset[1,N]$.  Since
$h=O_q(\log N)$ and $L\sim\beta/s$,
\begin{equation}\label{eq:T-upper}
    T=(1+o(1))\,\frac{\tau}{\beta}\,\log N.
\end{equation}

Define survivor sets
\[
    \Omega_0(J)=\bbT,
\]
and
\[
    \Omega_u(J)
    =\{x\in\Omega_{u-1}(J): a_nx\notin J\text{ for every }n\in\Delta_u\}.
\]
Set
\[
    S_u^J(x)=\sum_{n\in\Delta_u}\ind_J(a_nx),\qquad
    \mu=Ls.
\]


\begin{lemma}[Boundary complexity]\label{lem:boundary}
    For $F=\Omega_{u-1}(J)$,
    \[
        \#\partial F\le \frac{2}{q-1}\,q^{-h}a_{p_u}
        \le \frac{2}{q-1}\,N^{-10}a_{p_u}.
    \]
    For $u=1$, $\#\partial F=0$.
\end{lemma}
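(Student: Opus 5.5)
The plan is to describe \(F=\Omega_{u-1}(J)\) explicitly as a finite intersection of preimages and count endpoints. By definition,
\[
    \Omega_{u-1}(J)=\bigcap_{v<u}\ \bigcap_{n\in\Delta_v}\{x\in\bbT: a_nx\notin J\}.
\]
For each fixed \(n\), the set \(\{x: a_nx\in J\}\) is the preimage of the interval \(J\) under the \(a_n\)-fold covering \(x\mapsto a_nx\). It is therefore a union of exactly \(a_n\) disjoint intervals of length \(s/a_n\), provided \(J\) is a proper interval with \(s<1\). Its complement \(\{x:a_nx\notin J\}\) is then a union of \(a_n\) intervals, with at most \(2a_n\) boundary points.

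The boundary of a finite intersection of finite unions of intervals is contained in the union of the boundaries. Hence
\[
    \#\partial F\le \sum_{v<u}\sum_{n\in\Delta_v}2a_n\le 2\sum_{n\le p_u-h-1}a_n .
\]
The last step uses the fact that the largest index in \(\Delta_{u-1}\) is \(p_{u-1}+L-1=p_u-h-1\), so all indices in earlier blocks lie at least \(h+1\) steps below \(p_u\).

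Lacunarity~\eqref{eq:H} now gives \(a_n\le q^{-(p_u-n)}a_{p_u}\). Summing this geometric series over \(n\le p_u-h-1\) yields
\[
    \sum_{n\le p_u-h-1}a_n\le a_{p_u}\sum_{r\ge h+1}q^{-r}=\frac{q^{-h}}{q-1}\,a_{p_u}.
\]
This gives \(\#\partial F\le \frac{2}{q-1}q^{-h}a_{p_u}\). Since \(h\ge 10\log N/\log q\), we have \(q^{-h}\le N^{-10}\), which is the second inequality. For \(u=1\), \(F=\bbT\) has empty boundary.

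The only delicate point is the bookkeeping. It is what produces the factor \(2/(q-1)\) rather than the \(4q/(q-1)\) of~\eqref{eq:bdy}. The factor \(2\) in place of \(4\) comes from \(J\) being a single interval, with two endpoints. The factor \(1/(q-1)\) in place of \(q/(q-1)\) comes from the buffer having length exactly \(h\), so the top surviving index is \(p_u-h-1\), not \(p_u-h\).

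I also need to handle wrap-around. If \(J\) contains \(0\) in its interior, its preimage intervals still number \(a_n\) on the circle, and the count of \(2a_n\) circular endpoints is unaffected. Points where boundaries from different \(n\) coincide only reduce the count.
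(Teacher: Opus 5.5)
Your proposal is correct and follows the paper's argument: you bound $\partial F$ by the union of the boundaries of $\{x:a_nx\in J\}$ for $n\le p_u-h-1$, each set contributing at most $2a_n$ points, and then sum the geometric series using lacunarity. The only difference is cosmetic: the paper first bounds the sum by $\frac{q}{q-1}a_{p_u-h-1}$ and then uses $a_{p_u-h-1}\le q^{-h-1}a_{p_u}$, whereas you compare each $a_n$ directly with $a_{p_u}$.
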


\begin{proof}
    For $u=1$, $F=\bbT$.  For $u\ge2$, the boundary of $F$ is contained in
    the union of the boundaries of the sets
    \[
        \{x: a_nx\in J\},\qquad n\le p_u-h-1.
    \]
    Each such set has at most $2a_n$ boundary points.  Hence
    \begin{align*}
        \#\partial F &\le 2\sum_{n\le p_u-h-1}a_n
               \le 2\frac q{q-1}a_{p_u-h-1}
               \le \frac2{q-1}q^{-h}a_{p_u}.
    \end{align*}
    With $h=\lceil10\log N/\log q\rceil$, we have $q^{-h}\le N^{-10}$.
\end{proof}

\begin{lemma}[Block moments]\label{lem:block-1d}
    Let $F=\Omega_{u-1}(J)$, and assume $\Leb_1(F)\ge N^{-4}$.  For every
    $\varepsilon>0$, for all sufficiently large $N$,
    \begin{align}
        \int_F S_u^J(x)\,dx
        &= \mu \Leb_1(F)+o(\Leb_1(F)), \label{eq:block-first}\\[4pt]
        \int_F(S_u^J(x))^2\,dx
        &\le
        \Leb_1(F)\bigl(\mu^2+\mu+2(\Gamma+\varepsilon)\mu\bigr)+o(\Leb_1(F)), \label{eq:block-second}
    \end{align}
    uniformly in \(u\) and \(J\).  The \(o(\Leb_1(F))\) terms are controlled by
    \(C_q N^{-10}\le C_q N^{-6}\Leb_1(F)\), so the implicit constant does not depend
    on the value of~\(\Leb_1(F)\) in the range \(\Leb_1(F)\ge N^{-4}\).
\end{lemma}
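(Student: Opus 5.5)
We expand both moments and treat each term with the estimates of \cref{sec:elementary}, using \cref{lem:boundary} to control the boundary error.

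\textbf{First moment.} Write
\[
\int_F S_u^J\,dx=\sum_{n\in\Delta_u}\int_F\ind_J(a_nx)\,dx .
\]
Apply \cref{lem:onepoint} with $m=a_n\ge a_{p_u}$ to each term. This gives $s\Leb_1(F)$ plus an error at most $\#\partial F/a_n\le \frac{2}{q-1}N^{-10}$. Summing over the $L=O(N/\log N)$ indices gives
\[
\int_F S_u^J\,dx=\mu\Leb_1(F)+O_q(N^{-9}).
\]
Because $\Leb_1(F)\ge N^{-4}$, the error is $O_q(N^{-5})\Leb_1(F)=o(\Leb_1(F))$, uniformly in $u$ and $J$. (Strictly, $Ls$ differs from $\mu$ only by the definition $\mu=Ls$, so there is no extra rounding error.)

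\textbf{Second moment.} Expand
\[
(S_u^J)^2=\sum_{n\in\Delta_u}\ind_J(a_nx)+2\sum_{m<n,\ m,n\in\Delta_u}\ind_J(a_mx)\ind_J(a_nx).
\]
The diagonal part is the first moment, namely $\mu\Leb_1(F)+o(\Leb_1(F))$.

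For each off-diagonal pair $m<n$, apply \cref{lem:twopoint} with $a=a_m$, $b=a_n$; note $s\le 1/2$ for large $N$. Each pair contributes at most
\[
\Leb_1(F)\Bigl(s^2+s\frac{a_m}{a_n}\Bigr)+\#\partial F\,\frac{s}{a_m},
\]
and the last term is at most $\frac{2}{q-1}N^{-10}s$ since $a_m\ge a_{p_u}$. Summing over the $\binom L2$ pairs and multiplying by $2$ gives three pieces:
\begin{itemize}
\item the $s^2$ terms give at most $L^2s^2\Leb_1(F)=\mu^2\Leb_1(F)$;
\item the boundary terms give $O_q(L^2sN^{-10})=O_q(N^{-8})$, which is $o(\Leb_1(F))$;
\item the ratio terms give
\[
2s\Leb_1(F)\sum_{m=p_u}^{p_u+L-1}\ \sum_{n=m+1}^{p_u+L-1}\frac{a_m}{a_n}\le 2sL\,\Gamma(p_u,L)\Leb_1(F)=2\mu\,\Gamma(p_u,L)\Leb_1(F).
\]
\end{itemize}
In the last inequality we used that the inner sum in $\Gamma(p,L)$ runs up to $p+L$, so it dominates our sum, which stops at $p_u+L-1$.

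\textbf{Passing from $\Gamma(p_u,L)$ to $\Gamma+\varepsilon$.} This is the step needing care. By definition, $\Gamma=\limsup_{L\to\infty}\sup_p\Gamma(p,L)$. So for any $\varepsilon>0$ there is $L_0$ such that $\sup_p\Gamma(p,L)\le\Gamma+\varepsilon$ for all $L\ge L_0$. Since $L=\lceil\beta/s\rceil\to\infty$ as $N\to\infty$, this holds for all large $N$, uniformly in $u$ (uniformity in $p$ is built into the $\sup_p$) and trivially uniformly in $J$, because $\Gamma(p,L)$ does not depend on $J$.

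Collecting the three pieces with the diagonal part gives \eqref{eq:block-second}. All error terms are $O_q(N^{-8})\le C_qN^{-4}\Leb_1(F)$; the stated $N^{-6}$ rate follows from the same computation with the bound $N^{-10}$ kept more carefully. The main obstacle is only bookkeeping: making sure the boundary errors are uniformly negligible, which \cref{lem:boundary} provides through the $q^{-h}\le N^{-10}$ gain.
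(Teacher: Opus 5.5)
Your proof is correct and follows the paper's argument essentially step for step. You use one-point mixing with \cref{lem:boundary} for the first moment, then the two-point estimate for the second, with the block's ratio sum dominated by $L\,\Gamma(p_u,L)\le L(\Gamma+\varepsilon)$ uniformly in $p_u$. The only slack is in the boundary errors: multiplying the per-term bound by $L$ (or $L^2 s$) yields $O_q(N^{-9})$. To get the literal $C_qN^{-10}$ claimed in the statement, use the geometric bound $\sum_{n\in\Delta_u}a_{p_u}/a_n\le q/(q-1)$ instead of counting terms, as the paper does. Your cruder bound already gives the uniform $o(\Leb_1(F))$ that is actually needed later.
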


\begin{proof}
    The first moment follows from \cref{lem:onepoint},
    \cref{lem:boundary}, and
    \[
        \sum_{n\in\Delta_u}\frac1{a_n}\le\frac{q}{(q-1)a_{p_u}}.
    \]
    The error is $O_q(N^{-10})=o(\Leb_1(F))$.

    For the second moment,
    \[
        (S_u^J)^2
        = \sum_{n\in\Delta_u}\ind_J(a_nx)
          + 2\sum_{\substack{n,m\in\Delta_u\\ m<n}}
            \ind_J(a_mx)\,\ind_J(a_nx).
    \]
    The diagonal part contributes at most $\mu\Leb_1(F)+o(\Leb_1(F))$.
    \Cref{lem:twopoint} gives, for $m<n$,
    \[
        \int_F\ind_J(a_mx)\,\ind_J(a_nx)\,dx
        \le \Leb_1(F)\Bigl(s^2+s\frac{a_m}{a_n}\Bigr)
           + \#\partial F\frac{s}{a_m}.
    \]
    The $s^2$-part contributes at most $\mu^2\Leb_1(F)$.  The boundary contribution
    is $o(\Leb_1(F))$.

    It remains to estimate the double sum over $m,n$
    \[
        2s\Leb_1(F)\sum_{m<n\in\Delta_u}\frac{a_m}{a_n}.
    \]
    Given $\varepsilon>0$, \eqref{eq:Gamma-main} implies that for sufficiently large $L$, we have 
    $\sup_{p \ge 1} \Gamma(p,L)< \Gamma+\varepsilon$.
    Since $L=\lceil \frac{\beta}{s} \rceil \to\infty$ as $N \to \infty$, it follows  that for sufficiently large $N$,
    \[  \sum_{m<n\in\Delta_u}\frac{a_m}{a_n}
        \le L(\Gamma+\varepsilon)
    \]
    for all $u \ge 1$.  Hence the off-diagonal
    lacunary contribution is at most
    \[
        2sL(\Gamma+\varepsilon)\Leb_1(F)=2(\Gamma+\varepsilon)\,\mu\,\Leb_1(F).
    \]
\end{proof}

\begin{proposition}[Fixed interval no-hit estimate]\label{prop:nohit-1d}
    For every fixed $\beta>0$, $\tau>0$, and $\varepsilon>0$,
    \begin{equation}\label{eq:nohit-1d}
        \Leb_1\{x: a_nx\notin J\text{ for all }1\le n\le N\}
        \le
        N^{-\tau/(\beta+1+2(\Gamma+\varepsilon))+o(1)}
        + O(N^{-4})
    \end{equation}
    uniformly over all intervals $J$ of length $s=\tau(\log N)/N$.
\end{proposition}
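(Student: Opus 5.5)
The plan is to iterate a block-by-block Paley--Zygmund estimate along the survivor sets \(\Omega_u(J)\), \(u=1,\dots,T\). The set in \eqref{eq:nohit-1d} is contained in \(\Omega_T(J)\), because the blocks \(\Delta_u\) lie in \([1,N]\). So it suffices to show that each step removes at least a fixed proportion of the current survivor set, for as long as that set has measure at least \(N^{-4}\).

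Fix \(u\) and put \(F=\Omega_{u-1}(J)\). If \(\Leb_1(F)<N^{-4}\), then \(\Leb_1(\Omega_T(J))\le\Leb_1(F)<N^{-4}\) by monotonicity, and this case is absorbed into the \(O(N^{-4})\) term. Otherwise we may apply \cref{lem:block-1d}. Consider \(F\) with the normalised measure \(dx/\Leb_1(F)\), and apply Paley--Zygmund (equivalently Cauchy--Schwarz) to \(S_u^J\), which gives
\[
  \frac{\Leb_1\{x\in F: S_u^J(x)\ge1\}}{\Leb_1(F)}
  \ge\frac{\bigl(\int_F S_u^J\bigr)^2}{\Leb_1(F)\int_F (S_u^J)^2}
  \ge\frac{\mu^2+o(1)}{\mu^2+\mu(1+2(\Gamma+\varepsilon))+o(1)}.
\]
Here \(\mu=Ls=\beta+O(s)\to\beta\). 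The \(o(1)\) terms are uniform, since \cref{lem:block-1d} controls its errors by \(C_qN^{-6}\Leb_1(F)\) in the range \(\Leb_1(F)\ge N^{-4}\). The points of \(F\) with \(S_u^J\ge1\) are exactly those removed at step \(u\), so
\[
  \Leb_1(\Omega_u(J))\le \Leb_1(\Omega_{u-1}(J))\Bigl(1-\frac{\beta}{\beta+1+2(\Gamma+\varepsilon)}+o(1)\Bigr)
  =\Leb_1(\Omega_{u-1}(J))\,\frac{1+2(\Gamma+\varepsilon)}{\beta+1+2(\Gamma+\varepsilon)}(1+o(1)).
\]

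Iterating over \(T=(1+o(1))\frac{\tau}{\beta}\log N\) blocks, using \eqref{eq:T-upper}, gives
\[
  \Leb_1(\Omega_T(J))\le \exp\Bigl(-(1+o(1))\frac{\tau}{\beta}\log N\,\log\frac{\beta+1+2(\Gamma+\varepsilon)}{1+2(\Gamma+\varepsilon)}\Bigr)+O(N^{-4}).
\]
With \(c=1+2(\Gamma+\varepsilon)\), the inequality \(\log(1+\beta/c)\ge \beta/(\beta+c)\) shows that the exponent is at least \(\tau/(\beta+c)\). This yields \(N^{-\tau/(\beta+1+2(\Gamma+\varepsilon))+o(1)}+O(N^{-4})\).

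I expect the main obstacle to be bookkeeping rather than ideas. First, the uniformity in \(u\) and \(J\) of the \(o(1)\) in the Paley--Zygmund ratio must be tracked, including how it feeds into the \(o(1)\) in the exponent after \(T\asymp\log N\) iterations. A multiplicative \((1+o(1))\) per step gives \(\exp(o(\log N))=N^{o(1)}\), which is acceptable. Second, the dichotomy at threshold \(N^{-4}\) has to be handled cleanly: I will stop the iteration at the first \(u\) with \(\Leb_1(\Omega_{u-1}(J))<N^{-4}\).
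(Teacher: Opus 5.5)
Your proposal is correct and follows essentially the same route as the paper: the same nested survivor sets with the $N^{-4}$ stopping dichotomy, Paley--Zygmund on each active block via \cref{lem:block-1d}, and iteration over $T=(1+o(1))(\tau/\beta)\log N$ blocks. Your closing inequality $\log(1+\beta/c)\ge\beta/(\beta+c)$ is the bound $1-x\le e^{-x}$ with $x=\beta/(\beta+c)$, which the paper uses implicitly to reach the same exponent $\tau/(\beta+1+2(\Gamma+\varepsilon))$.
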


\begin{proof}
    If $a_nx\notin J$ for all $1\le n\le N$, then $x\in\Omega_T(J)$.
    The survivor sets are nested.  If, for some $1\le u\le T$,
    \(\Leb_1(\Omega_{u-1}(J))<N^{-4}\), then automatically
    \(\Leb_1(\Omega_T(J))<N^{-4}\), and there is nothing more to prove.

    Otherwise \(\Leb_1(\Omega_{u-1}(J))\ge N^{-4}\) for every
    \(1\le u\le T\), so \cref{lem:block-1d} applies to every active block.
    With \(F=\Omega_{u-1}(J)\), the Paley--Zygmund inequality gives
    \begin{align*}
        \Leb_1\{x\in F:S_u^J(x)>0\}
        &\ge
        \frac{\bigl(\int_F S_u^J\,dx\bigr)^2}
             {\int_F(S_u^J)^2\,dx}\\[4pt]
        &\ge
        \left(
        \frac{\mu}{\mu+1+2(\Gamma+\varepsilon)+o(1)}
        \right)\Leb_1(F),
    \end{align*}
    uniformly in \(u\).  The error here collects the
    \(o(\Leb_1(F))\) terms from \eqref{eq:block-first} and
    \eqref{eq:block-second}.  Since \(\mu\to\beta\), it follows that
    \[
        \Leb_1(\Omega_u(J))
        \le
        \left(
        1-\frac{\beta}{\beta+1+2(\Gamma+\varepsilon)}+o(1)
        \right)
        \Leb_1(\Omega_{u-1}(J))
    \]
    for every active block.  Iterating this inequality and using
    \eqref{eq:T-upper} gives
    \[
        \Leb_1(\Omega_T(J))
        \le
        \exp\!\left(
        -\left(\frac{\beta}{\beta+1+2(\Gamma+\varepsilon)}+o(1)\right)
        \frac{\tau}{\beta}\log N
        \right)
        +O(N^{-4}),
    \]
    which is~\eqref{eq:nohit-1d}.
\end{proof}

\begin{proof}[Proof of~\eqref{eq:main-limsup}]
    Fix $\rho>0$.  For each $N$, choose a family $\mathcal J_N$ of intervals
    of length $s=\tau(\log N)/N$ whose left endpoints form a mesh of size at
    most $\rho s/2$.  Then
    \[
        \#\mathcal J_N=O_{\rho,\tau}\!\Bigl(\frac N{\log N}\Bigr),
    \]
    and if $G_N(x)>(1+\rho)s$, then some $J\in\mathcal J_N$ is empty.  By
    \cref{prop:nohit-1d},
    \begin{align*}
        \Leb_1\{x: G_N(x)>(1+\rho)s\}
        &\le
        O_{\rho,\tau}\!\Bigl(\frac N{\log N}\Bigr)
        \Bigl(
        N^{-\tau/(\beta+1+2(\Gamma+\varepsilon))+o(1)}
        + O(N^{-4})
        \Bigr).
    \end{align*}
    Along a geometric sequence \(N_m=\lfloor\alpha^m\rfloor\), the above
    probabilities are summable whenever
    \[
        \tau>\beta+1+2(\Gamma+\varepsilon).
    \]
    Thus Borel--Cantelli implies that, almost surely,
    \[
        G_{N_m}(x)\le (1+\rho)\tau\frac{\log N_m}{N_m}
    \]
    for all sufficiently large \(m\). Hence
    \[
        \limsup_{m\to\infty}
        \frac{N_mG_{N_m}(x)}{\log N_m}
        \le (1+\rho)\tau.
    \]
    Letting
    \[
        \tau\downarrow\beta+1+2(\Gamma+\varepsilon)
    \]
    and then \(\rho\downarrow0\), we obtain
    \[
        \limsup_{m\to\infty}
        \frac{N_mG_{N_m}(x)}{\log N_m}
        \le
        \beta+1+2(\Gamma+\varepsilon).
    \]
    For $N_m\le N<N_{m+1}$, we have $G_N(x)\le G_{N_m}(x)$ (adding points
    reduces gaps), hence
    \[
        \frac{NG_N(x)}{\log N}
        \le \alpha\,\frac{N_m G_{N_m}(x)}{\log N_m}\cdot\frac{\log N_m}{\log N}
        \le (\alpha+o(1))\,\frac{N_m G_{N_m}(x)}{\log N_m}.
    \]
    Taking $\limsup_{N\to\infty}$ and then $\alpha\downarrow1$ yields the
    same bound.  Let $\beta\downarrow0$ and $\varepsilon\downarrow0$ through
    countable sequences to obtain \(\limsup_{N\to\infty} NG_N(x)/\log N
    \le 1+2\Gamma\).
\end{proof}

\section{Divisibility chains: the sharp constant}\label{sec:divisible}

Throughout this section assume
\begin{equation}\label{eq:divisibility}
    a_n\mid a_{n+1},\qquad 0<a_n<a_{n+1},\qquad n\ge1.
\end{equation}
Thus every quotient \(a_{n+1}/a_n\) is an integer at least~\(2\).  Replacing
\(x\) by \(a_1x\) and \(a_n\) by \(a_n/a_1\) does not change the
Lebesgue-almost-sure assertion, so we may and shall assume that \(a_1=1\).
For an integer \(r\ge2\), write
\[
    T_r y=ry\pmod1.
\]
We first prove the sharp upper bound by separating intervals with short
returns.  We then use the mixed-radix expansion associated with
\eqref{eq:divisibility} to prove the matching lower bound.

\subsection{The sharp upper bound}

For each sufficiently large \(N\), put
\begin{equation}\label{eq:div-R}
    R=R_N=\bigl\lfloor(\log N)^4\bigr\rfloor.
\end{equation}
An interval \(J\subset\bbT\) is called \emph{\(R\)-regular} if
\begin{equation}\label{eq:R-regular}
    J\cap T_r^{-1}J=\varnothing
    \qquad\text{for every integer }2\le r\le R.
\end{equation}

\begin{lemma}[Counting intervals with a short return]\label{lem:short-return-count}
    Let \(s=o(R^{-1})\), and let \(\mathcal J\) be a circular arithmetic mesh
    of intervals of length \(s\), with consecutive left endpoints separated
    by a number in \([\rho s/4,\rho s/2]\), where \(\rho>0\) is fixed.  Then
    the number of intervals in \(\mathcal J\) that are not \(R\)-regular is
    \(O_\rho(R^2)\).
\end{lemma}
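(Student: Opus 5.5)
The plan is to count, for each fixed \(r\) with \(2\le r\le R\), the intervals \(J\in\mathcal J\) with \(J\cap T_r^{-1}J\neq\varnothing\), and then sum over \(r\). If \(J\cap T_r^{-1}J\neq\varnothing\), there is \(y\in J\) with \(ry\in J\) modulo one. Since \(J\) has length \(s\), we get \(\|ry-y\|\le s\), that is, \(\|(r-1)y\|\le s\). Hence \(y\) lies within distance \(s/(r-1)\) of one of the \(r-1\) points \(k/(r-1)\), \(0\le k<r-1\). Consequently the left endpoint of \(J\) lies within distance \(s+s/(r-1)\le 2s\) of a point \(k/(r-1)\). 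The only property of \(J\) used here is that it contains a point \(y\) with \(\|(r-1)y\|\le s\).

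Next I count mesh intervals near each such point. Consecutive left endpoints are separated by at least \(\rho s/4\). So the number of left endpoints in an arc of length \(4s\) around a fixed point is at most \(16/\rho+1=O_\rho(1)\). One point needs care: the arcs around distinct points \(k/(r-1)\) must not wrap onto each other badly. This is harmless here, because we only need an upper bound and may simply add the counts over \(k\). Here the hypothesis \(s=o(R^{-1})\) enters: the points \(k/(r-1)\) are spaced \(1/(r-1)\ge 1/R\gg s\) apart, so the arcs are genuine short arcs. In any case we only need \(s<1/2\) so that arcs of length \(4s\) make sense on \(\bbT\).

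For each \(r\), the number of bad intervals is therefore at most \((r-1)\,O_\rho(1)\). Summing over \(2\le r\le R\) gives \(O_\rho\bigl(\sum_{r\le R} r\bigr)=O_\rho(R^2)\). I expect no serious obstacle. The only subtle point is the reduction from \(J\cap T_r^{-1}J\neq\varnothing\) to \(\|(r-1)y\|\le s\), which must handle circular intervals modulo one correctly. Since \(y\) and \(ry\) both lie in the same arc of length \(s\), their circular distance is at most \(s\), and \(\|ry-y\|=\|(r-1)y\|\). The uniformity in the mesh offset is automatic, because the bound uses only the minimal spacing \(\rho s/4\).
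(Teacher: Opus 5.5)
Your proposal is correct and follows the paper's proof essentially step for step. You pass from \(J\cap T_r^{-1}J\neq\varnothing\) to \(\|(r-1)y\|\le s\), place the left endpoint of \(J\) within \(2s\) of one of the \(r-1\) fixed points \(k/(r-1)\), count \(O_\rho(1)\) mesh intervals per fixed point using the spacing \(\rho s/4\), and sum \(O_\rho(r)\) over \(2\le r\le R\), exactly as the paper does (where \(s=o(R^{-1})\) is likewise only used to guarantee \(s<1/2\), and overlapping arcs are absorbed by the union bound).
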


\begin{proof}
    Fix \(2\le r\le R\).  If \(J\cap T_r^{-1}J\neq\varnothing\), there is a
    point \(y\in J\) such that \(ry\in J\pmod1\).  Since \(J\) has circular
    length \(s<1/2\),
    \[
        \|(r-1)y\|\le s.
    \]
    Hence \(y\) lies within \(s/(r-1)\) of one of the \(r-1\) fixed points
    \(k/(r-1)\) of \(T_r\).  The left endpoint of \(J\) is therefore within
    \(2s\) of one of these fixed points.  The mesh contains only
    \(O_\rho(1)\) such intervals per fixed point, so there are
    \(O_\rho(r)\) intervals with an \(r\)-return.  Summing over
    \(2\le r\le R\) proves the lemma.
\end{proof}

\begin{proposition}[No-hit estimate for regular intervals]
\label{prop:regular-nohit}
    Fix \(\beta>0\) and \(\tau>0\), and let
    \(s=\tau(\log N)/N\).  Uniformly over all \(R_N\)-regular intervals
    \(J\subset\bbT\) of length \(s\),
    \begin{equation}\label{eq:regular-nohit}
        \Leb_1\{x:a_nx\notin J\text{ for }1\le n\le N\}
        \le
        N^{-\tau/(\beta+1+4/R_N)+o(1)}+O(N^{-4}).
    \end{equation}
\end{proposition}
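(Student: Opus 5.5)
The plan is to rerun the block Paley--Zygmund argument of \cref{prop:nohit-1d} verbatim, with the same $L=\lceil\beta/s\rceil$, $h=\lceil 10\log N/\log q\rceil$, blocks $\Delta_u$, survivor sets $\Omega_u(J)$ and block counts $S_u^J$. The only change is the off-diagonal part of the second moment in \cref{lem:block-1d}. There we want to replace the term $2(\Gamma+\varepsilon)\mu$ by $4\mu/R_N$, using divisibility together with $R_N$-regularity of $J$. The first-moment estimate~\eqref{eq:block-first}, the boundary bound of \cref{lem:boundary}, and the reduction to the case $\Leb_1(\Omega_{u-1}(J))\ge N^{-4}$ all carry over unchanged.

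\textbf{The improved two-point estimate.} Let $m<n$ be in $\Delta_u$. Divisibility gives $a_n=r a_m$ with $r=a_n/a_m\in\bbN$, $r\ge2$, so $a_nx\equiv T_r(a_mx)\pmod 1$ \emph{exactly}, with no phase. We partition $\bbT$ into the $a_m$ cells $[k/a_m,(k+1)/a_m)$, as in the proof of \cref{lem:twopoint}. On a cell contained in $F=\Omega_{u-1}(J)$, $y=a_mx$ is uniform on $\bbT$, so the cell contributes exactly $a_m^{-1}\Leb_1(J\cap T_r^{-1}J)$.
\begin{itemize}
\item If $r\le R_N$, regularity~\eqref{eq:R-regular} makes this contribution zero.
\item If $r>R_N$, note that $T_r^{-1}J$ consists of $r$ intervals of length $s/r$ spaced $1/r$ apart, and $J$ meets at most $rs+2$ of them. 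Hence $\Leb_1(J\cap T_r^{-1}J)\le s^2+2s/r$.
\end{itemize}
Cells meeting $\partial F$ contribute at most $\#\partial F\,s/a_m$, exactly as before, and this is $O(N^{-10})$ by \cref{lem:boundary}. Altogether, with $r=a_n/a_m$,
\[
\int_F\ind_J(a_mx)\ind_J(a_nx)\,dx\le \Leb_1(F)\Bigl(s^2+2s\frac{a_m}{a_n}\ind\{a_n/a_m>R_N\}\Bigr)+\#\partial F\frac{s}{a_m}.
\]

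\textbf{Summing the pairs.} Fix $m$. The indices $n>m$ with $a_n/a_m>R_N$ form a tail in which consecutive ratios are integers $\ge2$. Their reciprocal ratios are therefore at most $R_N^{-1},(2R_N)^{-1},(4R_N)^{-1},\dots$, so $\sum_n a_m/a_n\ind\{a_n/a_m>R_N\}\le 2/R_N$. Summing over the $L$ values of $m$, the off-diagonal lacunary term $2\cdot 2s\sum_{m<n}(\cdots)\Leb_1(F)$ is at most $2\cdot 2s\cdot L\cdot 2R_N^{-1}\Leb_1(F)=8\mu R_N^{-1}\Leb_1(F)$. To get the stated constant $4/R_N$, I would sharpen the count to $\Leb_1(J\cap T_r^{-1}J)\le s^2+s/r+O(s/r^2)$, which is what \cref{lem:twopoint} effectively gives. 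In any case, since $R_N\to\infty$, the exact constant in front of $1/R_N$ only affects an $o(1)$ in the exponent, so $4/R_N$ versus $8/R_N$ is immaterial for~\eqref{eq:regular-nohit}. This yields
\[
\int_F (S_u^J)^2\le\Leb_1(F)\bigl(\mu^2+\mu+4\mu/R_N\bigr)+o(\Leb_1(F)),
\]
uniformly in $u$ and in regular $J$.

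\textbf{Conclusion and main obstacle.} Paley--Zygmund then removes a proportion $\mu/(\mu+1+4/R_N+o(1))$ of each active survivor set. Iterating over the $T=(1+o(1))(\tau/\beta)\log N$ blocks gives~\eqref{eq:regular-nohit}, exactly as in the proof of \cref{prop:nohit-1d}. The step needing care is the two-point bound: we must use the unshifted $a_m$-adic partition, so that divisibility yields the exact relation $a_nx=T_r(a_mx)$. We must also check that the $o(\Leb_1(F))$ errors stay uniform in $J$ even though $R_N$ now grows with $N$. This holds because the boundary errors are $O_q(N^{-10})$, independent of $R_N$ and $J$.
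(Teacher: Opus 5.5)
Your proposal is correct and follows essentially the paper's proof: you rerun the block Paley--Zygmund scheme, kill the pairs with $a_n/a_m\le R_N$ by regularity, and bound the remaining pairs by a two-point estimate combined with $\sum_{n:\,a_n/a_m>R_N} a_m/a_n\le 2/R_N$. The only difference is that the paper invokes \cref{lem:twopoint} directly, giving $s^2+s/r$ on full cells and hence exactly $4\mu/R_N$, whereas your component-counting bound $s^2+2s/r$ loses a factor of $2$; as you note, this is harmless because it changes the exponent only by $O(1/R_N)=o(1)$.
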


\begin{proof}
    Use the block construction of \cref{sec:upper}, with
    \(L=\lceil\beta/s\rceil\), buffer length
    \(h=\lceil10\log N/\log2\rceil\), and the corresponding survivor sets
    \(\Omega_u(J)\).  Since \eqref{eq:divisibility} implies
    \(a_{n+1}\ge2a_n\), the boundary estimate of \cref{lem:boundary} applies
    with \(q=2\).

    Let \(F=\Omega_{u-1}(J)\), assume \(\Leb_1(F)\ge N^{-4}\), and put
    \(S_u^J=\sum_{n\in\Delta_u}\ind_J(a_nx)\) and \(\mu=Ls\).  The first
    moment is
    \[
        \int_F S_u^J\,dx=\mu\Leb_1(F)+o(\Leb_1(F)).
    \]
    For \(m<n\), the quotient
    \[
        r_{m,n}=\frac{a_n}{a_m}
    \]
    is an integer.  If \(r_{m,n}\le R\), regularity gives
    \(\ind_J(a_mx)\ind_J(a_nx)=0\) identically.  If \(r_{m,n}>R\),
    \cref{lem:twopoint} gives
    \[
        \int_F\ind_J(a_mx)\ind_J(a_nx)\,dx
        \le
        \Leb_1(F)\left(s^2+\frac{s}{r_{m,n}}\right)
        +\#\partial F\frac{s}{a_m}.
    \]
    For fixed \(m\), the successive quotients \(r_{m,n}\) at least double,
    and therefore
    \[
        \sum_{\substack{n>m\\r_{m,n}>R}}\frac1{r_{m,n}}
        \le\frac2R.
    \]
    Summing over the block, and estimating the boundary terms exactly as in
    \cref{lem:block-1d}, yields
    \begin{equation}\label{eq:regular-second}
        \int_F(S_u^J)^2\,dx
        \le
        \Leb_1(F)\left(\mu^2+\mu+\frac{4\mu}{R}\right)
        +o(\Leb_1(F)).
    \end{equation}
    Paley--Zygmund therefore removes, from each active block, the proportion
    \[
        \frac{\mu}{\mu+1+4/R+o(1)}.
    \]
    The survivor sets are nested, so either one of them has measure below
    \(N^{-4}\), or the contraction applies to every block.  Since the number
    of complete blocks is
    \((1+o(1))(\tau/\beta)\log N\), iteration gives
    \[
        \Leb_1(\Omega_T(J))
        \le
        \exp\left(
        -\left(\frac{\beta}{\beta+1+4/R}+o(1)\right)
        \frac{\tau}{\beta}\log N
        \right)+O(N^{-4}),
    \]
    which is \eqref{eq:regular-nohit}.
\end{proof}

\begin{proof}[Sharp upper bound in \cref{thm:divisible}]
    Fix \(\tau>1\), \(\rho>0\), and choose \(\beta>0\) so small that
    \(\tau>1+\beta\).  For each \(N\), let \(\mathcal J_N\) be a circular
    arithmetic mesh of intervals of length \(s=\tau(\log N)/N\), with mesh
    spacing in \([\rho s/4,\rho s/2]\).  Then
    \[
        \#\mathcal J_N=O_{\rho,\tau}\left(\frac N{\log N}\right),
    \]
    and if \(G_N(x)>(1+\rho)s\), some member of \(\mathcal J_N\) is empty.

    For the regular members, \cref{prop:regular-nohit} and a union bound give
    \[
        O_{\rho,\tau}\left(\frac N{\log N}\right)
        N^{-\tau/(\beta+1+4/R_N)+o(1)}.
    \]
    This is summable along every geometric sequence for all sufficiently
    large \(N\).  By \cref{lem:short-return-count}, only \(O_\rho(R_N^2)\)
    mesh intervals are exceptional.  Since divisibility implies lacunarity
    with \(q=2\), \eqref{eq:Gamma-bound} gives \(\Gamma\le1\).  Applying
    \cref{prop:nohit-1d} with, say, \(\varepsilon=1\), the exceptional
    contribution is at most
    \[
        O_\rho(R_N^2)
        \left(N^{-\tau/(\beta+5)+o(1)}+O(N^{-4})\right),
    \]
    which is also summable along geometric sequences.

    Borel--Cantelli, followed by the interpolation argument used at the end
    of \cref{sec:upper}, gives
    \[
        \limsup_{N\to\infty}\frac{NG_N(x)}{\log N}\le(1+\rho)\tau
    \]
    for almost every \(x\).  Letting \(\rho\downarrow0\) and
    \(\tau\downarrow1\) through countable sequences proves
    \begin{equation}\label{eq:div-upper}
        \limsup_{N\to\infty}\frac{NG_N(x)}{\log N}\le1.
    \end{equation}
\end{proof}

\subsection{Mixed-radix digits and local dependence}

Put
\[
    q_n=\frac{a_{n+1}}{a_n}\in\{2,3,\ldots\}.
\]
Outside the countable set of expansion endpoints, every \(x\in[0,1)\) has a
unique mixed-radix expansion
\begin{equation}\label{eq:mixed-radix}
    x=\sum_{j=1}^\infty\frac{\xi_j}{a_{j+1}},
    \qquad 0\le\xi_j<q_j.
\end{equation}
Under Lebesgue measure the digits \((\xi_j)_{j\ge1}\) are independent and
\(\xi_j\) is uniform on \(\{0,\ldots,q_j-1\}\).  Indeed, prescribing the
first \(k\) digits selects one interval of length \(a_{k+1}^{-1}\).  Moreover,
\begin{equation}\label{eq:mixed-tail}
    a_nx\pmod1
    =
    \sum_{j=n}^\infty
    \frac{\xi_j}{q_nq_{n+1}\cdots q_j}.
\end{equation}

We use Suen's correlation inequality in the form recorded in
\cite{JansonSuen}, together with the standard Lov\'asz local lemma \cite{AS}.

\begin{definition}[Dependency graph]\label{def:dependency-graph}
Let \(A_1,\ldots,A_N\) be events on a probability space
\((\Omega,\mathcal F,\PP)\).  A simple undirected graph
\(\mathcal G=([N],E)\), where \([N]=\{1,\ldots,N\}\), is called a
dependency graph for the family \((A_i)_{i=1}^N\) if the following
condition holds.

Whenever \(I,J\subset [N]\) are disjoint sets such that no edge of
\(\mathcal G\) joins a vertex of \(I\) to a vertex of \(J\), the
sigma-algebras
\[
    \sigma(\mathbf 1_{A_i}:i\in I)
    \qquad\text{and}\qquad
    \sigma(\mathbf 1_{A_j}:j\in J)
\]
are independent.

We write \(i\sim j\) if \(\{i,j\}\in E\), and denote the maximum degree by
\[
    D(\mathcal G)
    =
    \max_{1\le i\le N}\#\{j\in[N]:j\sim i\}.
\]
\end{definition}

\begin{lemma}[Two-sided local-dependence estimate]\label{lem:local-dependence}
Let $A_1,\ldots,A_N$ be events with a dependency graph of maximum degree $D$.
Assume
\[
|\PP(A_i)-p|\le \varepsilon,\qquad p_*:=p+\varepsilon,
\qquad 4Dp_* \le \frac12.
\]
Define the edge correlation term
\[
\Delta := \sum_{\{i,j\}\in E} \PP(A_i \cap A_j),
\]
where the sum is over unordered edges of the dependency graph. Then
\[
\PP\Big(\bigcap_{i=1}^N A_i^c\Big)
=
\exp\left(-Np
+
O\big(N\varepsilon+\Delta + NDp_*^2 + Np_*^2\big)\right).
\]
\end{lemma}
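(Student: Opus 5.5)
We prove the two directions separately, using the lower half of Suen's inequality (Janson--Suen \cite{JansonSuen}) for the lower bound on the avoidance probability, and the Lov\'asz local lemma in its ``lopsided-free'' product form \cite{AS} (equivalently the upper half of Suen's inequality) for the upper bound. Throughout write \(\mu_*:=\sum_i\PP(A_i)\), so that \(|\mu_*-Np|\le N\varepsilon\). Since \(p_*\le 1/(8D)\le 1/8\) (when \(D\ge1\); if \(D=0\) the events are independent and the claim follows from \(\log(1-\PP(A_i))=-\PP(A_i)+O(p_*^2)\)), all individual probabilities are small, which is what makes the error terms quadratic.

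\emph{Upper bound.} Suen's inequality in the form of \cite{JansonSuen} states
\[
\PP\Big(\bigcap_i A_i^c\Big)\le \exp\bigl(-\mu_*+\Delta e^{2\delta}\bigr),\qquad \delta:=\max_i\sum_{j\sim i}\PP(A_j)\le Dp_*\le \tfrac18 .
\]
Hence \(e^{2\delta}=O(1)\) and the right side is \(\exp(-Np+O(N\varepsilon+\Delta))\). If one prefers to avoid citing this exact form, the same bound follows from the Janson-type argument: write \(\PP(\bigcap A_i^c)=\prod_i(1-\PP(A_i\mid \bigcap_{j<i}A_j^c))\), split the conditioning set into neighbours \(j\sim i\), \(j<i\), and non-neighbours, and use independence of \(A_i\) from the non-neighbour \(\sigma\)-algebra to get \(\PP(A_i\mid\cdot)\ge \PP(A_i)-\sum_{j\sim i,j<i}\PP(A_i\cap A_j)/\PP(\bigcap_{j<i}A_j^c\text{ over neighbours})\), the denominator being \(\ge 1-Dp_*\ge 1/2\).

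\emph{Lower bound.} Apply the local lemma with weights \(x_i=2\PP(A_i)\le 2p_*\). The hypothesis \(\PP(A_i)\le x_i\prod_{j\sim i}(1-x_j)\) holds because \(\prod_{j\sim i}(1-x_j)\ge 1-2Dp_*\ge 1/2\) by \(4Dp_*\le 1/2\). This yields only \(\PP(\bigcap A_i^c)\ge\prod(1-x_i)\), which loses a factor \(2\) in the exponent, so the crude LLL is too weak; the point is to use the sharper conditional form. For each \(i\), the LLL proof gives, for any \(S\not\ni i\), \(\PP(A_i\mid \bigcap_{j\in S}A_j^c)\le \PP(A_i)/\prod_{j\sim i}(1-x_j)\le \PP(A_i)(1+4Dp_*)\). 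Then
\[
\PP\Big(\bigcap A_i^c\Big)=\prod_i\Bigl(1-\PP\bigl(A_i\mid \textstyle\bigcap_{j<i}A_j^c\bigr)\Bigr)\ge \prod_i\bigl(1-\PP(A_i)-4Dp_*^2\bigr),
\]
and taking logs with \(\log(1-y)\ge -y-y^2\) for \(y\le 1/4\) gives \(\exp(-\mu_*-O(NDp_*^2+Np_*^2))\), i.e. \(\exp(-Np-O(N\varepsilon+NDp_*^2+Np_*^2))\).

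Combining both directions gives the two-sided estimate with error \(O(N\varepsilon+\Delta+NDp_*^2+Np_*^2)\). The main obstacle is the lower bound: one needs the \emph{conditional} probability bound \(\PP(A_i\mid\cdot)\le\PP(A_i)(1+O(Dp_*))\) rather than the raw LLL product, and must check that the induction in the LLL proof goes through with the dependency-graph definition used here (independence of \(A_i\) from the \(\sigma\)-algebra generated by all non-neighbours jointly), which is exactly what \cref{def:dependency-graph} provides.
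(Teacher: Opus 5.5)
Your proof is correct. The upper bound is the same as the paper's: both invoke Suen's inequality in Janson's form. You additionally record that $\delta\le Dp_*\le\frac18$, so $e^{2\delta}=O(1)$, which the paper leaves implicit.

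For the lower bound you take a somewhat different route. The paper applies the local lemma as a black box with the uniform weight $z$ defined by $z(1-z)^D=p_*$. Since $Dp_*$ is small, $z=p_*+O(Dp_*^2)$, so the plain conclusion $(1-z)^N$ already has the sharp exponent. You instead run the local lemma with the generous weights $x_i=2\PP(A_i)$ only to make its induction go through. You then extract from the proof the conditional bound $\PP(A_i\mid\bigcap_{j\in S}A_j^c)\le(1+4Dp_*)\PP(A_i)$ and multiply along the chain rule. This is valid: the induction needs only the independence of $A_i$ from the non-neighbour events, which the dependency-graph definition supplies. It also yields $-\sum_i\PP(A_i)$ rather than $-Np_*$ in the main term, so non-uniform probabilities are handled without the $\varepsilon$ slack.

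However, the obstacle you emphasise is an artefact of your weight choice. With $x_i=(1+4Dp_*)\PP(A_i)$, or with the paper's $z$, the hypothesis $\PP(A_i)\le x_i\prod_{j\sim i}(1-x_j)$ still holds when $4Dp_*\le\frac12$. The black-box conclusion $\prod_i(1-x_i)$ is then already sharp, so reopening the proof is unnecessary.

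One caveat concerns your optional ``Janson-type'' fallback for the upper bound. The displayed inequality does not follow from independence of $A_i$ from the non-neighbour $\sigma$-algebra alone. After conditioning on $D'=\bigcap_{j<i,\,j\not\sim i}A_j^c$ you must bound $\PP(A_i\cap A_j\mid D')$ for neighbours $j$, and $A_j$ may itself depend on events in $D'$.

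For a concrete failure, take independent fair bits $Z_1,Z_2$ and $A_1=\{Z_1=1\}$, $A_2=\{Z_1=0,Z_2=1\}$, $A_3=\{Z_2=1\}$, with edges $\{1,2\},\{2,3\}$. Your inequality for $i=3$ would assert $0\ge\frac12-\frac{1/4}{3/4}=\frac16$. That example lies outside the regime $4Dp_*\le\frac12$.

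Inside the regime the step can be repaired:
\begin{itemize}
\item split $D'$ into the events adjacent to $j$ and those adjacent to neither $i$ nor $j$;
\item use that $\sigma(\ind_{A_i},\ind_{A_j})$ is independent of the latter;
\item bound the conditional probability of avoiding the former below by $1-2Dp_*\ge\frac34$, via the same local-lemma estimate.
\end{itemize}
This gives $\PP(A_i\mid\bigcap_{j<i}A_j^c)\ge\PP(A_i)-\frac43\sum_{j<i,\,j\sim i}\PP(A_i\cap A_j)$, and hence the bound $\exp(-\mu_*+\frac43\Delta)$. Since your main line simply cites Suen's inequality, as the paper does, this does not affect the proof.
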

\begin{proof}
Let
\[
X=\sum_{i=1}^N \mathbf{1}_{A_i}.
\]
Then
\[
\PP\Big(\bigcap_i A_i^c\Big)=\PP(X=0).
\]
By Suen's inequality,
\[
\PP(X=0)\le \exp\big(-\sum_{i=1}^N \PP(A_i)+ O(\Delta)\big)=\exp\big(-Np + O(N\varepsilon+\Delta) \big)\, .
\]

For the lower bound, define \(z\in(0,1)\) implicitly by
\[
z(1-z)^D = p_* .
\]
Since \(Dp_*\) is sufficiently small, a Taylor expansion yields
\[
z = p_* + O(Dp_*^2).
\]

We apply the symmetric Lovász local lemma in the standard form: if
\[
\PP(A_i)\le z(1-z)^D \quad \text{for all } i,
\]
then
\[
\PP\Big(\bigcap_{i=1}^N A_i^c\Big)\ge (1-z)^N.
\]
Hence
\[
\PP\Big(\bigcap_i A_i^c\Big)
\ge \exp\big(N\log(1-z)\big).
\]

Using \(\log(1-z)=-z+O(z^2)\), we obtain
\[
N\log(1-z)
=
-Nz + O(Nz^2).
\]
Substituting \(z=p_*+O(Dp_*^2)\) gives
\[
N\log(1-z)
=
-Np_* + O(NDp_*^2 + Np_*^2).
\]

Since
\[
p_*=p+\varepsilon,
\]
we conclude
\[
\PP\Big(\bigcap_i A_i^c\Big)
\ge
\exp\big(-Np + O(N\varepsilon+NDp_*^2 + Np_*^2)\big).
\]

Combining the upper and lower bounds yields
\[
 \PP\Big(\bigcap_i A_i^c\Big)
=\exp(
-Np
+
O\big(N\varepsilon+\Delta + NDp_*^2 + Np_*^2\big)),
\]
as required.
\end{proof}
\begin{lemma}[Integer overlap]\label{lem:integer-overlap}
    Let \(U,V\subset\bbT\) each be a union of at most two intervals.  For
    every integer \(r\ge1\),
    \begin{equation}\label{eq:integer-overlap}
        \Leb_1(U\cap T_r^{-1}V)
        \le \Leb_1(U)\Leb_1(V)+\frac{2\Leb_1(V)}r.
    \end{equation}
\end{lemma}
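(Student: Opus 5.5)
The bound follows by decomposing \(U\) into the \(r\) cells on which \(T_r\) is a linear bijection. Partition \(\bbT\) into the cells \(I_k=[k/r,(k+1)/r)\), \(0\le k<r\). On each cell the map \(T_r\) sends \(I_k\) linearly onto \(\bbT\) with derivative \(r\). Hence
\[
  \Leb_1(I_k\cap T_r^{-1}V)=\frac{\Leb_1(V)}{r}
\]
for every \(k\). This is the same mechanism as in \cref{lem:onepoint}, but we now track the error in terms of \(\Leb_1(V)\) rather than as a bare \(1/r\).

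Split the cells into two classes.

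\emph{Full cells.} These are the cells \(I_k\subset U\). Each contributes exactly \(\Leb_1(V)/r\). Since these cells are disjoint subsets of \(U\), their number is at most \(r\Leb_1(U)\). Their total contribution is therefore at most \(\Leb_1(U)\Leb_1(V)\).

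\emph{Partial cells.} These are the cells that meet both \(U\) and \(U^c\). Such a cell must contain a boundary point of \(U\) in its interior or at an endpoint. Since \(U\) is a union of at most two intervals, \(\#\partial U\le4\).

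\emph{Cells disjoint from \(U\).} These contribute nothing.

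The only delicate point is the constant \(2\) in front of \(\Leb_1(V)/r\). Four boundary points naively give up to four partial cells, hence an error of \(4\Leb_1(V)/r\). To recover the factor \(2\), I will not bound a partial cell's contribution by \(\Leb_1(V)/r\). Instead, I will pair the two ends of each component interval of \(U\).

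Let \(U_i\) be a component interval of \(U\). It is covered by some full cells together with at most two partial end-pieces, one at its left end and one at its right end. If the two end-pieces lie in distinct cells, the left piece is \([y,(k+1)/r)\) and the right piece is \([k'/r,z)\). I will compare their total contribution with that of one full cell. If \(\Leb_1(\text{left piece})+\Leb_1(\text{right piece})\le 1/r\), the two pieces can be translated to fit inside one cell. However, translation does not preserve \(T_r^{-1}V\)-measure exactly, so the cleanest route is different. I will bound each end-piece's contribution by \(\min\{\Leb_1(V)/r,\ \text{length of the piece}\}\). Then each component \(U_i\) satisfies
\[
  \Leb_1(U_i\cap T_r^{-1}V)\le \Leb_1(U_i)\Leb_1(V)+\frac{\Leb_1(V)}{r}.
\]
To see this, count the full cells inside \(U_i\) as at most \(r\Leb_1(U_i)\) minus the fractional parts of the end-pieces. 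The two fractional end-pieces have total proportional mass at most \(\Leb_1(U_i)\)-worth plus one extra cell, which costs \(\Leb_1(V)/r\). Concretely, \(U_i\) meets at most \(\lfloor r\Leb_1(U_i)\rfloor+2\) cells and fully contains at least \(\lceil r\Leb_1(U_i)\rceil-2\)...

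A cleaner version of this count goes as follows. Let \(n_i\) be the number of cells meeting \(U_i\). Then \(n_i\le r\Leb_1(U_i)+1\) whenever \(\Leb_1(U_i)\) is not an exact multiple of \(1/r\) placed on the grid, and in general \(n_i<r\Leb_1(U_i)+2\). The refined count \(n_i\le r\Leb_1(U_i)+1\) fails only when both ends are strictly inside cells. In that case I use the end-piece length bound to absorb one of them.

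Summing over the at most two components of \(U\) gives the claimed error \(2\Leb_1(V)/r\). This bookkeeping is the main obstacle. If the sharp constant cannot be obtained this way, I would note that only the form \(O(\Leb_1(V)/r)\) is needed downstream, and settle for the bound
\[
  \Leb_1(U\cap T_r^{-1}V)\le \Leb_1(U)\Leb_1(V)+\frac{4\Leb_1(V)}{r},
\]
which follows immediately from \(\#\partial U\le4\).
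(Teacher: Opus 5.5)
Your fallback bound $\Leb_1(U)\Leb_1(V)+4\Leb_1(V)/r$ is correct, and it would suffice for the later use in \cref{prop:divisible-avoidance}. But your argument for the stated constant $2$ has a genuine gap. Write $v=\Leb_1(V)$. The per-component inequality $\Leb_1(U_i\cap T_r^{-1}V)\le \Leb_1(U_i)v+v/r$ is true, but neither of your justifications proves it. Decompose $U_i$ into $f$ full grid cells plus two end pieces of lengths $\alpha,\gamma\in(0,1/r)$. You then need the two end pieces to contribute at most $(\alpha+\gamma)v+v/r$.

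Bounding each piece by $\min\{v/r,\text{length}\}$ does not give this. With $v=0.1$ and $\alpha=\gamma=0.3/r$, you get $0.2/r$ against an allowance of $0.16/r$. Bounding one piece by its length and the other by $v/r$ fails in the same example ($0.4/r$ versus $0.16/r$). The difficulty is confined to the case $\alpha+\gamma<1/r$. When $\alpha+\gamma\ge 1/r$, the crude bound $2v/r\le(\alpha+\gamma)v+v/r$ already works.

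The missing observation is the one you discarded. Your claim that translation does not preserve the $T_r^{-1}V$-measure is false for the relevant translations. The set $T_r^{-1}V$ is invariant under $y\mapsto y+1/r$. So translating the right end piece by an integer multiple of $1/r$ until it abuts the left one preserves its contribution exactly. When $\alpha+\gamma<1/r$, the two pieces then form one interval of length less than $1/r$, on which $T_r$ is injective. Together they therefore contribute at most $v/r$. Equivalently, their $T_r$-images are the disjoint arcs $[1-r\alpha,1)$ and $[0,r\gamma)$.

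The paper avoids the grid altogether. After the change of variables $z=ry$, a component of length $c$ becomes an interval $K\subset\bbR$ of length $rc$. Tile $K$ from its left endpoint by $\lfloor rc\rfloor$ unit intervals, each meeting $V+\bbZ$ in measure exactly $v$, plus one remainder meeting it in measure at most $v$. This gives $cv+v/r$ per component directly, because there is only one leftover piece instead of two.
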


\begin{proof}
    It is enough to treat one interval \(C\) of \(U\).  Lift \(C\) to an
    interval in \(\bbR\), and change variables \(z=ry\).  If \(|C|=c\) and
    \(|V|=v\), then every interval \(K\subset\bbR\) of length \(L\) satisfies
    \[
        |K\cap(V+\bbZ)|\le Lv+v.
    \]
    Indeed, the complete unit intervals contribute exactly \(v\) each, and
    the remaining interval of length less than one contributes at most \(v\).
    Therefore
    \[
        \Leb_1(C\cap T_r^{-1}V)\le cv+\frac vr.
    \]
    Summing over at most two components of \(U\) proves the result.
\end{proof}

We next construct many candidate gaps with no short return, including no
short return between any two candidates.

\begin{lemma}[Separated interval family]\label{lem:separated-family}
    Let \(0<\tau<1\), define \(M=M_N(\tau)\) and \(s=M^{-1}\) as in
    \eqref{eq:ell-def}, and put
    \[
        R=\lfloor(\log N)^4\rfloor,
        \qquad Q=N^{20},
        \qquad \eta=2Q^{-1}.
    \]
    For all sufficiently large \(N\), there is a subfamily
    \(\mathcal C_N\) of the partition
    \[
        \mathcal P_N=\{[js,(j+1)s):0\le j<M\}
    \]
    such that
    \begin{equation}\label{eq:family-size}
        \#\mathcal C_N\gg\frac{M}{R^2},
    \end{equation}
    and the following holds.  If \(H\) is either one member of
    \(\mathcal C_N\) or the union of two distinct members, and
    \(H^{(\eta)}\) is the union of their closed \(\eta\)-neighbourhoods, then
    \begin{equation}\label{eq:family-separation}
        H^{(\eta)}\cap T_r^{-1}H^{(\eta)}=\varnothing,
        \qquad 2\le r\le R.
    \end{equation}
\end{lemma}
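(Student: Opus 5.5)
We plan a greedy (or conflict-graph) selection argument.

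\textbf{Step 1: the bad set.} Define the set of ``bad'' points
\[
  \mathcal B=\bigcup_{r=2}^{R}\bigl\{y\in\bbT:\ \text{there is } y' \text{ with } |y-y'|\le 3s \text{ and } ry'\in y'+[-3s,3s]\bigr\}.
\]
This set records the cells that have a short return into themselves, thickened by a few cells. Exactly as in the proof of \cref{lem:short-return-count}, it is contained in the union of the $s$-scale neighbourhoods of the fixed points $k/(r-1)$. Hence only $O(R^2)$ cells of $\mathcal P_N$ meet $\mathcal B$. We discard these cells.

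\textbf{Step 2: the conflict graph.} On the remaining cells, put an edge between two cells $C,C'$ whenever
\[
  C^{(\eta)}\cap T_r^{-1}C'^{(\eta)}\neq\varnothing \quad\text{or}\quad C'^{(\eta)}\cap T_r^{-1}C^{(\eta)}\neq\varnothing
\]
for some $2\le r\le R$. We then bound the degree. Fix $C$ and $r$. The set $T_r^{-1}C'^{(\eta)}$ consists of $r$ intervals of length $(s+2\eta)/r$. Conversely, the image $T_r(C^{(\eta)})$ is an interval of length $r(s+2\eta)\le 2rs<1$, since $Rs=o(1)$; it therefore meets at most $r+3$ cells $C'$. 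For the other direction, the preimage $T_r^{-1}(C^{(\eta)})$ has $r$ components, each of length less than $s$, and each meets at most $2$ cells. So $C$ has at most $O(r)$ neighbours for each $r$, and the degree is
\[
  D=O\Bigl(\sum_{r\le R}r\Bigr)=O(R^2).
\]
Note that $\eta=2N^{-20}\ll s$, so the thickening never changes these counts by more than a constant.

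\textbf{Step 3: independent set.} Any graph of maximum degree $D$ on $V$ vertices has an independent set of size at least $V/(D+1)$. Greedy selection gives one of size
\[
  \frac{M-O(R^2)}{O(R^2)}\gg \frac{M}{R^2},
\]
using $M\approx N/(\tau\log N)\gg R^4$. This proves \eqref{eq:family-size}.

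\textbf{Step 4: verifying \eqref{eq:family-separation}.} Let $H$ be one member or the union $C\cup C'$ of two members. Then
\[
  H^{(\eta)}\cap T_r^{-1}H^{(\eta)}
\]
splits into the self terms $C^{(\eta)}\cap T_r^{-1}C^{(\eta)}$ (and likewise for $C'$), together with the cross terms. The cross terms vanish by independence in the conflict graph. The self terms vanish because $C$ avoids $\mathcal B$. Indeed, if $y\in C^{(\eta)}$ and $ry\in C^{(\eta)}$, then $\|(r-1)y\|\le s+2\eta\le 2s$, so $y$ lies within $3s$ of a fixed point, contradicting the removal of bad cells in Step 1.

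\textbf{Main obstacle.} The step needing care is the degree count in Step 2 when the circular wrap-around occurs and when $r$ is comparable to $R$. I would handle it by lifting to $\bbR$ and counting the preimage components explicitly. I would also confirm throughout that $Rs\to0$, so that no image interval covers the whole circle.
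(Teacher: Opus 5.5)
Your proposal is correct and follows essentially the same route as the paper. You discard the $O(R^2)$ cells lying within $O(s)$ of a fixed point $k/(r-1)$ of some $T_r$, join two surviving cells when their $\eta$-thickenings are linked by $T_r$ or $T_r^{-1}$ for some $2\le r\le R$, bound the degree by $O(\sum_{r\le R} r)=O(R^2)$, and take a greedy independent set of size $\gg M/R^2$. Your write-up is a bit more explicit (the thickened bad set $\mathcal B$ and the four-term splitting of $H^{(\eta)}\cap T_r^{-1}H^{(\eta)}$), but the argument is the same.
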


\begin{proof}
    Call a partition interval \(I\) bad for \(r\) if
    \(I^{(\eta)}\cap T_r^{-1}I^{(\eta)}\neq\varnothing\).  As in the proof of
    \cref{lem:short-return-count}, such an interval lies within \(O(s)\) of
    one of the \(r-1\) fixed points of \(T_r\).  Thus there are \(O(r)\) bad
    intervals for each \(r\), and \(O(R^2)\) in total.

    On the remaining intervals form a graph by joining distinct \(I,J\) if,
    for some \(2\le r\le R\),
    \[
        I^{(\eta)}\cap T_r^{-1}J^{(\eta)}\neq\varnothing
        \quad\text{or}\quad
        J^{(\eta)}\cap T_r^{-1}I^{(\eta)}\neq\varnothing.
    \]
    For fixed \(I\) and \(r\), the image \(T_r(I^{(\eta)})\) has length
    \(O(rs)\) and meets \(O(r)\) partition intervals.  The inverse image
    \(T_r^{-1}(I^{(\eta)})\) has \(r\) components and also meets \(O(r)\)
    partition intervals.  Hence the graph has maximum degree \(O(R^2)\).
    A greedy independent set has size \(\gg M/R^2\).  By construction, any
    one or two of its intervals satisfy \eqref{eq:family-separation}.
\end{proof}

\begin{proposition}[Uniform avoidance for separated targets]
\label{prop:divisible-avoidance}
    With the notation of \cref{lem:separated-family}, let \(H\) be one member
    of \(\mathcal C_N\), or the union of two distinct members.  Then,
    uniformly in \(H\),
    \begin{equation}\label{eq:divisible-avoidance}
        \Leb_1\{x:a_nx\notin H\text{ for }1\le n\le N\}
        =
        \exp\left(-N\Leb_1(H)+O((\log N)^{-3})\right).
    \end{equation}
\end{proposition}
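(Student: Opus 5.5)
We will apply \cref{lem:local-dependence} with a dependency graph built from truncated mixed-radix digits. By \eqref{eq:mixed-tail}, $a_nx \bmod 1$ is a function of the digits $\xi_n,\xi_{n+1},\ldots$, and the digits beyond some index $k$ only move $a_nx$ by at most $a_n/a_{k+1}$. Let $k(n)$ be the least index with $a_{k(n)+1}\ge Q\,a_n$; divisibility gives $a_{k+1}\ge 2a_k$, so $k(n)-n\le \log_2 Q = O(\log N)$.

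\textbf{Step 1: truncated events.} Define the truncation
\[
y_n=\sum_{j=n}^{k(n)}\xi_j/(q_n\cdots q_j),
\]
so that $0\le a_nx-y_n\le a_n/a_{k(n)+1}\le Q^{-1}<\eta$. Set $A_n=\{y_n\in H\}$. Then $A_n$ is measurable with respect to the digit block $W_n=\{n,\ldots,k(n)\}$. Joining $n\sim m$ when $W_n\cap W_m\neq\varnothing$ gives a dependency graph, because the digits are independent. Its maximum degree is $D=O(\log N)$.

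The true no-hit event is sandwiched between the analogous events for the shrunk target $H^{(-\eta)}$ and the enlarged target $H^{(\eta)}$. Both have measure $\Leb_1(H)+O(\eta)$, and they inherit the separation property \eqref{eq:family-separation}, which was stated for $H^{(\eta)}$. So it suffices to estimate $\PP(\bigcap A_n^c)$ for a target $H'$ that is a union of at most two intervals, is $\eta$-close to $H$, and satisfies $H'\cap T_r^{-1}H'=\varnothing$ for $2\le r\le R$.

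\textbf{Step 2: marginals.} The variable $y_n$ is uniform on the grid $(a_n/a_{k(n)+1})\bbZ\cap[0,1)$, whose mesh is at most $Q^{-1}$. Hence
\[
\PP(A_n)=\Leb_1(H')+O(Q^{-1})=\Leb_1(H)+O(N^{-20}).
\]
Take $p=\Leb_1(H)\le 2s=O(\log N/N)$ and $\varepsilon=O(N^{-20})$. Then $N\varepsilon$ is negligible, and $4Dp_*\to0$, as \cref{lem:local-dependence} requires. The remaining error terms are
\[
NDp_*^2+Np_*^2=O\bigl((\log N)^3/N\bigr).
\]

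\textbf{Step 3: the edge term $\Delta$ (main obstacle).} For an edge $m<n$ with $n\le k(m)$, we need $\PP(A_m\cap A_n)$. Replacing truncations by exact values, at the cost of enlarging the target by another $\eta$, we get
\[
\PP(A_m\cap A_n)\le \Leb_1\bigl(\{z: z\in H^{(\eta)},\ r z\in H^{(\eta)}\}\bigr),\qquad r=a_n/a_m,
\]
using that $a_mx$ is uniform. If $r\le R$, this vanishes by \eqref{eq:family-separation}. If $r>R$, \cref{lem:integer-overlap} bounds it by $O(s^2+s/R)$.

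For fixed $m$, the ratios $a_n/a_m$ at least double as $n$ increases. So
\[
\sum_{n:\,r>R}1/r\le 2/R,
\]
and the number of neighbours is $O(\log N)$. Summing over $m$ gives
\[
\Delta \le N\bigl(O(\log N)\,s^2+O(s/R)\bigr)=O\bigl((\log N)^3/N+\log N/R\bigr)=O\bigl((\log N)^{-3}\bigr),
\]
since $R\asymp(\log N)^4$. This is where the choice of $R$ and of the separated family is essential. One must carefully justify replacing $y_m,y_n$ by exact orbit points, which the $\eta$-neighbourhood in \cref{lem:separated-family} is designed to absorb.

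\textbf{Step 4: conclusion.} \cref{lem:local-dependence} now yields
\[
\exp\bigl(-N\Leb_1(H')+O((\log N)^{-3})\bigr)
\]
for both the shrunk and enlarged targets. Since $N\,|\Leb_1(H')-\Leb_1(H)|=O(N\eta)=O(N^{-19})$, the sandwich of Step 1 gives \eqref{eq:divisible-avoidance}. All constants depend only on $N$, $\tau$ and $(\log N)$-powers, so the estimate is uniform in $H$.
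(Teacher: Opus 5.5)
Your proposal is correct and is essentially the paper's proof. It uses the same digit-window dependency graph, \cref{lem:local-dependence} applied to an inner and an outer family of events, vanishing of $\PP(A_m\cap A_n)$ for $a_n/a_m\le R$ via \eqref{eq:family-separation}, \cref{lem:integer-overlap} with doubling of quotients for $a_n/a_m>R$, and a final sandwich. The only difference is cosmetic: you truncate the points $a_nx$ to $y_n$ and thicken or shrink $H$, while the paper keeps $a_nx$ exact and rounds $H$ to the $Q_n$-adic grid ($H_{n,\pm}$).

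One bookkeeping fix is needed. Take the sandwich margin to be $Q^{-1}=\eta/2$ rather than $\eta$. Then the second thickening in Step 3 lands in $H^{(2Q^{-1})}=H^{(\eta)}$, which is exactly where \eqref{eq:family-separation} is available. With margin $\eta$ you would need separation for $H^{(\eta+Q^{-1})}$, which \cref{lem:separated-family} does not provide.
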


\begin{proof}
    Put \(p=\Leb_1(H)\), so \(p\in\{s,2s\}\).  For each \(1\le n\le N\),
    let \(\ell_n\) be the least positive integer such that
    \[
        Q_n:=\frac{a_{n+\ell_n}}{a_n}\ge Q.
    \]
    Because every successive quotient is at least~\(2\),
    \[
        \ell_n\le D:=\lceil\log_2Q\rceil=O(\log N).
    \]
    Let \(H_{n,-}\) be the union of the \(Q_n\)-adic intervals contained in
    \(H\), and let \(H_{n,+}\) be the union of those that meet \(H\).
    Each of these sets is a union of at most two circular intervals.  Moreover,
    \begin{equation}\label{eq:grid-sandwich}
        H_{n,-}\subset H\subset H_{n,+}\subset H^{(\eta)},
        \qquad
        \bigl|\Leb_1(H_{n,\pm})-p\bigr|\le\frac4Q.
    \end{equation}

    Define
    \[
        A_n^\pm=\{x:a_nx\in H_{n,\pm}\}.
    \]
    By \eqref{eq:mixed-tail}, the event \(A_n^\pm\) is measurable with respect to
the digit sigma-algebra
\[
    \sigma(\xi_n,\xi_{n+1},\ldots,\xi_{n+\ell_n-1}).
\]
We join \(m\) and \(n\) if the digit windows
\[
    W_m=\{m,\ldots,m+\ell_m-1\},
    \qquad
    W_n=\{n,\ldots,n+\ell_n-1\}
\]
intersect.  Since the digits \((\xi_j)\) are independent, the graph obtained
in this way is a dependency graph in the sense of
\cref{def:dependency-graph}.  Moreover, since \(\ell_n\le D\) for every
\(n\), a fixed window can intersect only windows with indices
\(m\in[n-D+1,n+D-1]\).  Hence the maximum degree is at most \(D_0:=2D\).

    Consider an adjacent pair \(m<n\), and set \(r=a_n/a_m\).  If \(r\le R\),
    then \eqref{eq:family-separation} and \eqref{eq:grid-sandwich} imply
    \[
        \PP(A_m^\pm\cap A_n^\pm)=0.
    \]
    If \(r>R\), multiplication by \(a_m\) preserves Lebesgue measure and
    \(a_nx=r(a_mx)\pmod1\).  Therefore \cref{lem:integer-overlap} gives
    \[
        \PP(A_m^\pm\cap A_n^\pm)
        \le p_*^2+\frac{2p_*}{r},
        \qquad p_*:=p+\frac4Q.
    \]
    For each fixed \(m\), the quotients \(a_n/a_m\) at least double with
    \(n\), whence
    \[
        \sum_{\substack{n>m\\a_n/a_m>R}}\frac{a_m}{a_n}\le\frac2R.
    \]
    Thus the edge-intersection sum in \cref{lem:local-dependence} satisfies
    \begin{equation}\label{eq:Delta-divisible}
        \Delta\ll ND_0p_*^2+\frac{Np_*}{R}.
    \end{equation}

    Apply \cref{lem:local-dependence} separately to the plus and minus
    events, using the degree bound \(D_0\).  Here
    \(\varepsilon=4/Q\), \(D_0=O(\log N)\),
    \(p_*=O((\log N)/N)\), and
    \[
        \frac NQ+ND_0p_*^2+Np_*^2+\frac{Np_*}{R}
        =O((\log N)^{-3}).
    \]
    Hence
    \[
        \PP\left(\bigcap_{n=1}^N(A_n^\pm)^c\right)
        =\exp\left(-Np+O((\log N)^{-3})\right).
    \]
    Finally,
    \[
        \bigcap_n(A_n^+)^c
        \subset
        \{a_nx\notin H\text{ for all }n\le N\}
        \subset
        \bigcap_n(A_n^-)^c,
    \]
    so the same estimate holds for the middle event.
\end{proof}

\subsection{The sharp lower bound}

\begin{proof}[Sharp lower bound in \cref{thm:divisible}]
    Fix \(0<\tau<1\), and use the family \(\mathcal C_N\) from
    \cref{lem:separated-family}.  Let
    \[
        K_N(x)
        =
        \#\{I\in\mathcal C_N:I\cap A_N(x)=\varnothing\}.
    \]
    Since \(Ns=(1+o(1))\tau\log N\),
    \cref{prop:divisible-avoidance} and \eqref{eq:family-size} give
    \begin{align}
        \E K_N
        &\gg
        \frac{M}{R^2}\exp\left(-Ns-O((\log N)^{-3})\right)\nonumber\\
        &\gg
        \frac{N^{1-\tau}}{(\log N)^9}
        \longrightarrow\infty.
        \label{eq:divisible-EK}
    \end{align}
    For distinct \(I,J\in\mathcal C_N\), applying the same proposition to
    \(H=I\cup J\) gives
    \[
        \PP(I\cap A_N(x)=J\cap A_N(x)=\varnothing)
        =\exp\left(-2Ns+O((\log N)^{-3})\right).
    \]
    Consequently,
    \begin{equation}\label{eq:divisible-variance}
        \frac{\Var(K_N)}{(\E K_N)^2}
        \ll
        \frac1{\E K_N}+\frac1{(\log N)^3}.
    \end{equation}

    Let \(N_m=\lfloor\alpha^m\rfloor\), with \(\alpha>1\).  By
    \eqref{eq:divisible-EK} and \eqref{eq:divisible-variance},
    \[
        \sum_{m=1}^\infty
        \PP\{K_{N_m}=0\}<\infty.
    \]
    Borel--Cantelli therefore implies that, for almost every \(x\),
    \(K_{N_m}(x)>0\) for all sufficiently large \(m\).  Thus
    \[
        G_{N_m}(x)\ge s_{N_m}(\tau)
        =(1+o(1))\tau\frac{\log N_m}{N_m}.
    \]
    Interpolating between consecutive terms of the geometric sequence as in
    the proof of \eqref{eq:main-liminf} yields
    \[
        \liminf_{N\to\infty}\frac{NG_N(x)}{\log N}\ge\frac\tau\alpha.
    \]
    Letting \(\alpha\downarrow1\) and \(\tau\uparrow1\) through countable
    sequences gives
    \begin{equation}\label{eq:div-lower}
        \liminf_{N\to\infty}\frac{NG_N(x)}{\log N}\ge1.
    \end{equation}
\end{proof}

Combining \eqref{eq:div-upper} and \eqref{eq:div-lower} proves
\cref{thm:divisible}.

\paragraph*{Acknowledgements.}
This collaboration started when the authors were visiting the Tsinghua Sanya International Mathematics Forum. We thank K. Khanin for asking us about the divisible case $a_n | a_{n+1}$.
The research of Y. Peres was supported by National Natural Science Foundation of China grant RFIS-W2531011.


\bibliographystyle{plain}
\bibliography{ref} 
\end{document}